\author{Johan Bredberg}
\title{Large gaps between consecutive zeros, on the critical line, of the Riemann zeta-function}
\newtheorem{theorem}{Theorem}
\newtheorem{corollary}{Corollary}
\theoremstyle{definition}
\newtheorem{remark}{Remark}
\theoremstyle{definition}
\newtheorem{definition}{Definition}
\begin{document}
\date{ }
\maketitle
\textbf{Abstract.} 
We show that for any sufficiently large $T,$ there exists a subinterval of $[T,2T]$ of length at least $2.766 \times \frac{2\pi}{\log{T}},$ in which the function $t \mapsto \zeta(\frac{1}{2} + it)$ has no zeros.

\section{Introduction} \label{section-introduction} 

It is well-known (see for example \cite{tit} or \cite{dav}) that the Riemann zeta-function $\zeta(s)$ has so called trivial zeros at $s = -2, -4, -6,...$ and that all the other zeros $s = \sigma + it$ (the non-trivial ones) lie in the critical strip, i.e.\ satisfy $0 < \sigma < 1$. The Riemann Hypothesis (RH) is a conjecture saying that in fact all the non-trivial zeros must lie on the critical line, i.e.\ satisfy $\sigma = 1/2$. Levinson \cite{levins} showed that at least a third of the non-trivial zeros of the Riemann zeta-function lie on the critical line. However, to this day we do not know the whole truth about the horizontal distribution of the zeros. 

It is also well-known that the number of non-trivial zeros with ordinates in $[0,T]$ is $\frac{T \log{T}}{2 \pi} + O(T),$ 
which tells us that the average difference of the ordinates of two consecutive zeros at height $T$ is approximately $2\pi/\log{T}.$ Denote by $\{\gamma_{n}\}$ the sequence of ordinates of all zeros of $\zeta(s)$ in the upper halfplane, ordered in non-decreasing order. A natural question to ask is what one can say about 
\begin{equation*} 
\mu := \liminf_{n \to \infty} 
\frac{\gamma_{n+1}-\gamma_{n}}{(2\pi/\log \gamma_{n})} 
\quad \textrm{and  }  
\lambda := \limsup_{n \to \infty} 
\frac{\gamma_{n+1}-\gamma_{n}}{(2\pi/\log \gamma_{n})}.
\end{equation*}
In 1946, Selberg \cite{sel} remarked that $\mu < 1$ and $\lambda > 1.$ 
Although this is still the only known unconditional result, it is believed to be far from the whole truth. Indeed, in 1973, Montgomery \cite{Mont} predicted that $\mu = 0$ and $\lambda = \infty.$ On the assumption of RH, Feng and Wu \cite{FW} recently obtained $\mu \leqslant 0.5154$ and $\lambda \geqslant 2.7327.$ In this article only large gaps are considered and our overall strategy was first used by Hall \cite{hall-ett}. We show the following: 

\begin{theorem}[Main Theorem] \label{huvudsats} 
For any sufficiently large $T,$ there exists a subinterval of $[T,2T]$ of length at least $2.766 \times \frac{2\pi}{\log{T}},$ in which the function 
$t \mapsto \zeta(\frac{1}{2} + it)$ has no zeros.
\end{theorem}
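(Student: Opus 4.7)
The plan is to use a Wirtinger-type inequality for an auxiliary entire function that vanishes at every zero of $\zeta(1/2+it)$ in $[T,2T]$, combined with asymptotic evaluations of two mean-square integrals. Let $B(s) = \sum_{n \leqslant N} b_{n}\, n^{-s}$ be a Dirichlet polynomial mollifier of length $N = T^{\theta}$ for some fixed $\theta < 1/2$, and set $F(s) := \zeta(s) B(s)$. Writing $G$ for the length of the largest subinterval of $[T,2T]$ that contains no zero of $\zeta(1/2+it)$, one checks (since the zero set of $F(1/2+it)$ contains that of $\zeta(1/2+it)$) that any two consecutive zeros $a < b$ of $t \mapsto F(1/2+it)$ in $[T,2T]$ satisfy $b - a \leqslant G$. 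Applying Wirtinger's inequality
\begin{equation*}
\int_{a}^{b} \bigl|F'(1/2+it)\bigr|^{2}\, dt \;\geqslant\; \left(\frac{\pi}{b-a}\right)^{2} \int_{a}^{b} \bigl|F(1/2+it)\bigr|^{2}\, dt
\end{equation*}
on each such subinterval and summing, while absorbing the boundary pieces into an error term, yields the fundamental inequality
\begin{equation*}
G^{2} \;\geqslant\; \pi^{2}\,(1 + o(1)) \cdot \frac{\int_{T}^{2T} |F(1/2+it)|^{2}\, dt}{\int_{T}^{2T} |F'(1/2+it)|^{2}\, dt}.
\end{equation*}

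\textbf{Mean squares and optimisation.} Next, the two mean squares must be computed asymptotically. For mollifiers of length $T^{\theta}$ with $\theta < 1/2$ and the standard shape $b_{n} = n^{-1/2} P(\log(N/n)/\log N)$ with $P$ a polynomial, the techniques of Balasubramanian--Conrey--Heath-Brown and subsequent work produce asymptotics
\begin{equation*}
\int_{T}^{2T} |F(1/2+it)|^{2}\, dt \;=\; c_{0}(P,\theta)\, T \log T + O(T)
\end{equation*}
and
\begin{equation*}
\int_{T}^{2T} |F'(1/2+it)|^{2}\, dt \;=\; c_{2}(P,\theta)\, T (\log T)^{3} + O\bigl(T (\log T)^{2}\bigr),
\end{equation*}
where $c_{0}$ and $c_{2}$ are explicit quadratic functionals in $P$ and $P'$ on $[0,1]$. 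Inserting these into the Wirtinger bound gives
\begin{equation*}
G \log T \;\geqslant\; \pi\,\sqrt{c_{0}(P,\theta)/c_{2}(P,\theta)}\,(1 + o(1)),
\end{equation*}
so it remains to maximise $c_{0}/c_{2}$ over polynomials $P$, letting $\theta$ approach $1/2$, until $\pi\sqrt{c_{0}/c_{2}} \geqslant 2.766 \cdot 2\pi$ is achieved. Since $c_{0}$ and $c_{2}$ are finite-dimensional quadratic forms in the coefficients of $P$, this is a concrete numerical problem.

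\textbf{Main obstacle.} The principal technical difficulty lies in evaluating the second mean square with sufficient precision. The factor $\zeta'(1/2+it)$ produces two extra logarithmic powers, and the cross-terms $\zeta' B \cdot \overline{\zeta B'}$ require careful contour-shift arguments (via an approximate functional equation or a Perron-type formula) so that the leading constant $c_{2}(P,\theta)$ comes out fully explicit. Once both $c_{0}$ and $c_{2}$ are in hand as explicit functionals of $P$, the final step is to exhibit a concrete polynomial $P$ of modest degree for which the numerical value $2.766$ is reached; this last optimisation, together with the delicate choice of the shape of the mollifier coefficients, is what must push the constant past previously known values such as Feng--Wu's $2.7327$.
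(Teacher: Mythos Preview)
Your proposal has a genuine structural gap: the auxiliary function $F(s)=\zeta(s)B(s)$ carries only one zeta factor, so both $\int|F|^{2}$ and $\int|F'|^{2}$ are \emph{twisted second moments} of $\zeta$ (this is exactly what the Balasubramanian--Conrey--Heath-Brown input you invoke computes). Wirtinger applied to this object is essentially Mueller's or Hall's original second-moment scheme, and the resulting ratio $c_{0}/c_{2}$, even after optimising over $P$ and letting $\theta\uparrow 1/2$, falls well short of the required $(2\times 2.766)^{2}\approx 30.6$. No choice of coefficients $b_{n}$ fixes this, because the second-moment asymptotics simply do not carry enough information; the known constants obtainable this way are below $2$.

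The paper's route is fundamentally a \emph{fourth-moment} argument. The auxiliary function is
\[
P(t)=e^{vi\theta(t)}\,M(\tfrac12+it)\,\zeta(\tfrac12+it)\,\zeta\bigl(\tfrac12+it+\tfrac{i\kappa}{\log T}\bigr),
\]
a product of \emph{two} shifted zeta values, an amplifier $M(s)=\sum_{h\le T^{u}}h^{-s}$ with $u<1/11$, and a free phase $e^{vi\theta(t)}$. Squaring produces $|M|^{2}|\zeta|^{2}|\zeta(\cdot+\text{shift})|^{2}$, so both sides of the Wirtinger inequality become twisted fourth moments, and the decisive technical input is the Hughes--Young twisted fourth moment theorem, not the mollified second moment. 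The three free parameters $(u,v,\kappa)$ are then optimised numerically to $(0.0909,\,2.13,\,8.69)$; the paper notes that sending $u\to 0$ (no amplifier) recovers Hall's $2.63$, and the amplifier is precisely what lifts this to $2.766$. Your scheme is missing all three ingredients --- the second zeta factor with its shift $\kappa$, the phase parameter $v$, and the amplifier --- and consequently also the fourth-moment asymptotics that actually drive the result.
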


\begin{remark} \label{average-gaps}
Notice that if we assume RH, then Theorem \ref{huvudsats} implies that 
$\lambda \geqslant 2.766.$
\end{remark}

\section{Building-stones in the proof of Theorem \ref{huvudsats}} 
\label{section-building-stones}

\subsection{Introducing the function $P(t,u,v,\kappa)$} 
\label{subsection-introducing-P}

\begin{definition} \label{f-def-hall}
Define 
\begin{equation} \label{f-def-hall-ekv}
P(t,u,v,\kappa) := \exp(v i \theta(t)) M(\tfrac{1}{2} + it) 
\zeta(\tfrac{1}{2} + it) 
\zeta(\tfrac{1}{2} + it + \tfrac{i \kappa}{\log{T}}), 
\end{equation} 
where 
\begin{equation} \label{theta-function-def}
\theta(t) := \textrm{Im}\Big(\log\Big(\Gamma\Big(\frac{1}{4} + \frac{it}{2}\Big)\Big)\Big) - \frac{t}{2} \log{\pi} 
\end{equation} 
and 
\begin{equation} \label{m-def-simplified}
M(s) := \sum_{h \leqslant T^{u}} \frac{1}{h^{s}},
\end{equation}
with $0 < u < 1/11.$ 
\end{definition}

\subsection{Main Assumption} \label{subsection-main-assumption}

We will now make an ``assumption". 
\\
\\
\textbf{Main Assumption}: Suppose that all the gaps between consecutive zeros of the function $t \mapsto P(t,u,v,\kappa)$ with 
$t \in [T,2T - \frac{\kappa}{\log{T}}]$ are\footnote{For either of the two zeros near the endpoints of the interval, we will here mean the distance from them to the respective endpoint.} at most $\frac{\kappa}{\log{T}}.$ 

\begin{remark} \label{Main-ass-remark}
For a suitable choice of $\kappa,$ $u$ and $v,$ we will eventually prove that our Main Assumption leads to a contradiction. 
\end{remark}

\subsection{Immediate consequences of our Main Assumption} 
\label{subsection-immediate-consequences}

Denote the zeros of $P(t,u,v,\kappa)$ with 
$T \leqslant t \leqslant 2T - \frac{\kappa}{\log{T}}$ by $t_{1}, t_{2},...,t_{N},$ ordered in non-decreasing order. Our Main Assumption implies that 
\begin{equation} \label{gap-gaps-ekv-sixth}
t_{i+1} - t_{i} 
\leqslant \frac{\kappa}{\log{T}}, 
\end{equation}
for $i = 1,2,...,N-1.$

\begin{remark} \label{andpunkter}
For future need we note here that our Main Assumption implies that 
\[ t_{1} \leqslant T + \frac{\kappa}{\log{T}} \textrm{ and } 
t_{N} \geqslant 2T - \frac{2\kappa}{\log{T}}. \]
\end{remark}

\subsection{Wirtinger's inequality and an application of it} 
\label{subsection-wirtinger}

We begin with the statement of the simplest version of Wirtinger's inequality. 

\begin{theorem} \label{wirtinger-ineq}
Suppose that $y(t)$ is a continuously differentiable function which satisfies 
$y(0) = y(\pi) = 0.$ Then 
\begin{equation} \label{wirtinger-complex-statement}
\int\limits_{0}^{\pi} |y(t)|^2 \; dt \leqslant 
\int\limits_{0}^{\pi} |y'(t)|^2 \; dt. 
\end{equation} 
\end{theorem}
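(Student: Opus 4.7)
The plan is to substitute $y(t) = \sin(t)\, u(t)$ in order to convert \eqref{wirtinger-complex-statement} into something manifestly non-negative. Since $y$ is $C^{1}$ on $[0,\pi]$ with $y(0)=y(\pi)=0$, a short endpoint analysis shows that the auxiliary function $u(t) := y(t)/\sin(t)$ extends to a continuously differentiable function on $[0,\pi]$, with finite one-sided limits $u(0) = y'(0)$ and $u(\pi) = -y'(\pi)$ obtained by l'H\^opital's rule. This is the only delicate point, and it is addressed separately below.

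With the substitution in place, I differentiate to get $y'(t) = \cos(t)\, u(t) + \sin(t)\, u'(t)$, expand $|y'|^{2}$, and handle the resulting cross term by integration by parts. The cross term is, up to a factor $\tfrac{1}{2}\sin(2t)$, a total derivative of $|u|^{2}$, so the boundary contribution vanishes automatically (because $\sin 0 = \sin 2\pi = 0$), and one is left with an integral against $-\cos(2t)$. Combining this with the $\cos^{2}(t)|u|^{2}$ term and invoking the identity $\cos^{2}(t) - \cos(2t) = \sin^{2}(t)$, the calculation collapses to
\begin{equation*}
\int_{0}^{\pi} |y'(t)|^{2}\, dt \;=\; \int_{0}^{\pi} \sin^{2}(t)\, |u(t)|^{2}\, dt \;+\; \int_{0}^{\pi} \sin^{2}(t)\, |u'(t)|^{2}\, dt \;=\; \int_{0}^{\pi} |y(t)|^{2}\, dt \;+\; \int_{0}^{\pi} \sin^{2}(t)\, |u'(t)|^{2}\, dt,
\end{equation*}
and the non-negativity of the last integral yields \eqref{wirtinger-complex-statement}. (As a sanity check, equality holds precisely when $u'\equiv 0$, i.e.\ when $y(t) = c\sin(t)$ for a constant $c$, which matches the well-known extremal case.)

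The main (and really only) obstacle is the behaviour of $u$ at the endpoints, where both $y$ and $\sin$ vanish, and where one must justify both the substitution itself and the integration by parts. This is purely routine: a Taylor expansion $y(t) = y'(0)\, t + o(t)$ as $t \to 0^{+}$ (and similarly at $t = \pi$) combined with $\sin(t) = t + O(t^{3})$ shows that $u$ and $u'$ are bounded near the endpoints, which is more than enough to carry through the integration by parts. A completely separate proof is available via the Fourier sine expansion $y(t) = \sum_{n \geqslant 1} b_{n} \sin(nt)$ and Parseval's identity, comparing $\sum n^{2}|b_{n}|^{2}$ with $\sum |b_{n}|^{2}$, but the substitution route above has the advantage of being entirely elementary.
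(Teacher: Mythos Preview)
Your approach is genuinely different from the paper's. The paper does not give a self-contained argument at all: it cites Theorem~256 of Hardy--Littlewood--P\'olya for the real-valued case and then observes that the complex case follows by splitting $y=y_1+iy_2$ into real and imaginary parts. Your substitution $y(t)=\sin(t)\,u(t)$ is a direct, elementary route that also identifies the equality case; the Fourier-series alternative you mention at the end is essentially the argument behind the cited result in HLP.

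There is, however, a real gap in your endpoint analysis. You assert that $u(t)=y(t)/\sin(t)$ extends to a $C^{1}$ function on $[0,\pi]$, and later that ``$u$ and $u'$ are bounded near the endpoints'' from the expansion $y(t)=y'(0)\,t+o(t)$. The first-order Taylor expansion gives only an $o(t)$ remainder, and with merely $C^{1}$ regularity on $y$ that is all you have; it is \emph{not} enough to force $u'$ bounded. Concretely, take a $C^{1}$ function with $y(t)=t^{3/2}$ near $0$ (so $y'(0)=0$). Then $u(t)=t^{3/2}/\sin t$ and a direct computation gives $u'(t)\sim \tfrac{1}{2}\,t^{-1/2}\to\infty$ as $t\to 0^{+}$. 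So $u$ is in general neither $C^{1}$ on $[0,\pi]$ nor is $u'$ bounded.

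Fortunately the computation you carry out does not actually need boundedness of $u'$, only continuity (hence boundedness) of $u$ itself, which \emph{does} follow from l'H\^opital. Work on $[\epsilon,\pi-\epsilon]$, where $u$ is $C^{1}$, and let $\epsilon\to 0$: the boundary term $\tfrac{1}{2}\sin(2t)\,|u(t)|^{2}$ vanishes in the limit because $u$ is bounded and $\sin(2t)\to 0$; the integrals $\int \cos^{2}\!t\,|u|^{2}$ and $\int \cos(2t)\,|u|^{2}$ converge by dominated convergence; and $\int \sin^{2}\!t\,|u'|^{2}$ converges (to a finite, non-negative value) because all other terms in your identity do. With this limiting argument in place your proof is complete; as written, the $C^{1}$/boundedness claim for $u'$ is false and should be removed.
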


\begin{proof}
For the case when $y(t)$ is a real-valued function, the reader is referred to Theorem 256 in Hardy, Littlewood and P\'{o}lya's \cite{HLP}. 

Say now that $y(t) = y_{1}(t) + iy_{2}(t),$ 
with $y_{1}$ and $y_{2}$ thus being real-valued continuously differentiable functions. Then clearly $y'(t) = y_{1}'(t) + iy_{2}'(t).$ 
What we want to show is 
\[ \int\limits_{0}^{\pi} y_{1}(t)^2 + y_{2}(t)^2 \; dt 
\leqslant \int\limits_{0}^{\pi} y_{1}'(t)^2 + y_{2}'(t)^2 \; dt, \]
but this immediately follows from the known (real) case.
\end{proof}

\begin{corollary} \label{wirtinger-ineq-scaled}
For $i = 1,2,...,N-1$ we have 
\begin{align*} 
\int\limits_{t_{i}}^{t_{i+1}} |P(t,u,v,\kappa)|^2 \; dt 
&\leqslant \Big( \frac{t_{i+1} - t_{i}}{\pi} \Big)^2
\int\limits_{t_{i}}^{t_{i+1}} |P'(t,u,v,\kappa)|^2 \; dt \\
&\leqslant \Big( \frac{\kappa}{\pi \log{T}} \Big)^2
\int\limits_{t_{i}}^{t_{i+1}} |P'(t,u,v,\kappa)|^2 \; dt. 
\end{align*}
\end{corollary}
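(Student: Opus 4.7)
The plan is to reduce the stated bound to Theorem \ref{wirtinger-ineq} by a one-variable affine rescaling of the interval $[t_i, t_{i+1}]$ onto $[0,\pi]$. The key observation is that $t_i$ and $t_{i+1}$ are consecutive zeros of $t \mapsto P(t,u,v,\kappa)$, so after rescaling we will land in exactly the setting of Wirtinger's inequality.

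Concretely, I would set $L := t_{i+1} - t_i$ and define $y(s) := P(t_i + sL/\pi, u, v, \kappa)$ for $s \in [0,\pi]$. Then $y(0) = y(\pi) = 0$ by construction, and $y$ is continuously differentiable because $P$ is real-analytic in $t$ on $[T,2T]$: it is the product of $\exp(vi\theta(t))$, the Dirichlet polynomial $M(\tfrac{1}{2}+it)$, and the two zeta-values $\zeta(\tfrac{1}{2}+it)$ and $\zeta(\tfrac{1}{2}+it+\tfrac{i\kappa}{\log T})$, none of which has a singularity on the real axis for $t$ in the range considered. The chain rule gives $y'(s) = (L/\pi)\, P'(t_i + sL/\pi, u, v, \kappa)$.

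Applying Theorem \ref{wirtinger-ineq} to $y$ yields
\[
\int_0^\pi |y(s)|^2\,ds \leq \int_0^\pi |y'(s)|^2\,ds,
\]
and substituting back via $t = t_i + sL/\pi$ (so $ds = (\pi/L)\,dt$) turns this into
\[
\frac{\pi}{L}\int_{t_i}^{t_{i+1}} |P(t,u,v,\kappa)|^2\,dt \;\leq\; \frac{L}{\pi}\int_{t_i}^{t_{i+1}} |P'(t,u,v,\kappa)|^2\,dt.
\]
Multiplying both sides by $L/\pi$ produces the first claimed inequality, and the second inequality is then immediate from the bound $L = t_{i+1} - t_i \leq \kappa/\log T$ supplied by \eqref{gap-gaps-ekv-sixth} under the Main Assumption.

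I do not anticipate any genuine obstacle: the argument is essentially a textbook rescaling of Wirtinger's inequality, and the only substantive point to verify is the continuous differentiability of $P$, which follows at once from its definition.
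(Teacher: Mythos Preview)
Your proof is correct and follows exactly the approach taken in the paper: a linear change of variable reduces the first inequality to Theorem~\ref{wirtinger-ineq}, and the second follows at once from \eqref{gap-gaps-ekv-sixth}. Your write-up is in fact more detailed than the paper's, which simply notes that a linear substitution and the continuous differentiability of $P$ suffice.
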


\begin{proof}
One may make a linear substitution in Theorem \ref{wirtinger-ineq} to obtain a similar result if the function $y(t)$ has zeros at two general points $a$ and $b.$ We do so for the function $P(t,u,v,\kappa),$ which is continuously differentiable, and this gives us the first inequality. The latter inequality follows immediately from (\ref{gap-gaps-ekv-sixth}).
\end{proof}

Simply summing up the inequalities in Corollary \ref{wirtinger-ineq-scaled} for $i = 1,2,...,N-1,$ we obtain 
\begin{equation} \label{wirtinger-t-n-cor-wirtinger}
\int\limits_{t_{1}}^{t_{N}} |P(t,u,v,\kappa)|^2 \; dt 
\leqslant \Big( \frac{\kappa}{\pi \log{T}} \Big)^2 
\int\limits_{t_{1}}^{t_{N}} |P'(t,u,v,\kappa)|^2 \; dt. 
\end{equation} 

\subsection{Choosing weight-functions} \label{subsection-choosing-weight}

\begin{definition} \label{h-fn-def}
With $\eta > 0$ being any suitably small (fixed) constant, we define 
\begin{equation} \label{h-weighy-def}
h(x) := \left\{ \begin{array}{l l}
  \exp(-\eta T_{0} / x) & \quad \mbox{if $x > 0$,}\\
  0 & \quad \mbox{if $x \leqslant 0,$}\\
\end{array} \right. 
\end{equation}
with 
\begin{equation} \label{T-noll-val}
T_{0} = T^{1 - \epsilon}. 
\end{equation}
\end{definition}

Then clearly $h(x)$ is $C^{\infty}$ and $h(x) \leqslant 1.$ Also, 
\begin{equation*} \label{h-der-property}
h'(x) := \left\{ \begin{array}{l l}
  \eta T_{0}^{-1} (T_{0}/x)^{2} \exp(-\eta T_{0} / x) & \quad \mbox{if $x > 0$,}\\
  0 & \quad \mbox{if $x \leqslant 0,$}\\
\end{array} \right. 
\end{equation*}
which is seen to imply $h'(x) \ll T_{0}^{-1}.$ And more generally one finds that $h^{(j)}(x) \ll_{j} T_{0}^{-j}.$

Now take 
\begin{equation} \label{w-minus-def}
w_{-}(x) = h(x-T-T_{0}) h(2T-T_{0}-x)
\end{equation}
and
\begin{equation} \label{w-plus-def}
w_{+}(x) = \exp(2 \eta) h(x-T+T_{0}) h(2T+T_{0}-x).
\end{equation}
Remembering Remark \ref{andpunkter}, it is easily seen that (\ref{wirtinger-t-n-cor-wirtinger}) implies 

\begin{corollary} \label{wirtinger-sixth-val-cor}
\begin{equation} \label{wirtinger-sixth-w-val-ekv}
\int\limits_{-\infty}^{\infty} 
w_{-}(t) |P(t,u,v,\kappa)|^2 \; dt 
\leqslant \Big( \frac{\kappa}{\pi \log{T}} \Big)^2 
\int\limits_{-\infty}^{\infty} 
w_{+}(t) |P'(t,u,v,\kappa)|^2 \; dt. 
\end{equation}
\end{corollary}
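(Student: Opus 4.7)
The plan is to sandwich the estimate (\ref{wirtinger-t-n-cor-wirtinger}) between two comparisons: bounding $w_{-}$ above by the indicator of a subinterval of $[t_{1},t_{N}]$, and bounding $w_{+}$ below by $1$ on $[t_{1},t_{N}]$. The multiplicative constant $\exp(2\eta)$ built into $w_{+}$ is precisely there to absorb the two factors of $e^{-\eta}$ that arise in the lower bound.

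First I would observe that $h(x)\leqslant 1$ for all $x$, so $w_{-}(t)\leqslant 1$ everywhere, and $w_{-}$ is supported on $[T+T_{0},2T-T_{0}]$. Since $T_{0} = T^{1-\epsilon}$ is much larger than $\kappa/\log T$ for $T$ sufficiently large, Remark \ref{andpunkter} gives $t_{1}\leqslant T+\kappa/\log T \leqslant T+T_{0}$ and $t_{N}\geqslant 2T-2\kappa/\log T \geqslant 2T-T_{0}$, so the support of $w_{-}$ is contained in $[t_{1},t_{N}]$. Therefore
\[ \int_{-\infty}^{\infty} w_{-}(t)\,|P(t,u,v,\kappa)|^{2}\,dt \;\leqslant\; \int_{t_{1}}^{t_{N}} |P(t,u,v,\kappa)|^{2}\,dt, \]
and the right-hand side is controlled by (\ref{wirtinger-t-n-cor-wirtinger}).

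Next I would verify that $w_{+}(t)\geqslant 1$ on $[t_{1},t_{N}]$. For $t\in[t_{1},t_{N}]\subseteq[T,2T]$ one has $t-T+T_{0}\geqslant T_{0}$ and $2T+T_{0}-t\geqslant T_{0}$, so the ratio $T_{0}/(\,\cdot\,)$ inside each exponential is at most $1$. Hence $h(t-T+T_{0})\geqslant e^{-\eta}$ and $h(2T+T_{0}-t)\geqslant e^{-\eta}$, giving $w_{+}(t)\geqslant e^{2\eta}\cdot e^{-\eta}\cdot e^{-\eta}=1$ on $[t_{1},t_{N}]$. Since $w_{+}\geqslant 0$ everywhere, this yields
\[ \int_{t_{1}}^{t_{N}} |P'(t,u,v,\kappa)|^{2}\,dt \;\leqslant\; \int_{-\infty}^{\infty} w_{+}(t)\,|P'(t,u,v,\kappa)|^{2}\,dt. \]

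Combining the two displays above with (\ref{wirtinger-t-n-cor-wirtinger}) gives the claimed inequality. There is no real obstacle here; the only mild point to check is the size comparison $\kappa/\log T \ll T_{0}$, which holds for all large $T$ since $T_{0}=T^{1-\epsilon}$. The specific shape of $h$ was engineered precisely so that $w_{-}$ and $w_{+}$ are smooth (so that later integration by parts arguments on the right-hand side of (\ref{wirtinger-sixth-w-val-ekv}) will not lose much), while at the same time giving the clean bounds $w_{-}\leqslant 1$ and $w_{+}\geqslant 1$ needed here.
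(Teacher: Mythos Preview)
Your proof is correct and fills in exactly the details the paper leaves implicit: the paper simply states that, remembering Remark~\ref{andpunkter}, the inequality (\ref{wirtinger-t-n-cor-wirtinger}) yields the corollary. Your two comparisons $w_{-}\leqslant \chi_{[t_{1},t_{N}]}$ and $w_{+}\geqslant \chi_{[t_{1},t_{N}]}$ are precisely the intended mechanism, and your verification that the support of $w_{-}$ lies inside $[t_{1},t_{N}]$ (via $T_{0}\gg \kappa/\log T$) and that $w_{+}\geqslant 1$ on $[T,2T]$ (via the $e^{2\eta}$ factor) is the natural justification.
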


\section{Going from Corollary \ref{wirtinger-sixth-val-cor} 
to Theorem \ref{huvudsats}} 
\label{section-going-from-to}

In this section we will write down asymptotic estimates for the Left Hand Side (LHS) and the Right Hand Side (RHS) in (\ref{wirtinger-sixth-w-val-ekv}), and use these to obtain an inequality in terms of $\kappa.$ 

\subsection{Giving names to some integrals} \label{subsection-giving-names}

Recall that $P(t,u,v,\kappa) = \exp(v i \theta(t)) M(\frac{1}{2} + it) 
\zeta(\frac{1}{2} + it) \zeta(\frac{1}{2} + it + \frac{i \kappa}{\log{T}}).$ Obviously 
\begin{equation} \label{f-squared}
|P(t,u,v,\kappa)|^{2} = |M(\tfrac{1}{2} + it)|^{2} 
|\zeta(\tfrac{1}{2} + it)|^{2} 
|\zeta(\tfrac{1}{2} + it + \tfrac{i \kappa}{\log{T}})|^{2}. 
\end{equation}
Next, using 
\begin{equation*} \label{m-der-into-n-etc}
M'(s) = - \sum_{h \leqslant T^{u}} \frac{\log{h}}{h^{s}} 
= N(s) - \log(T^{u}) M(s), 
\end{equation*}
where 
\begin{equation} \label{N-def-simplified}
N(s) := \sum_{h \leqslant T^{u}} \frac{\log(T^{u}/h)}{h^{s}}, 
\end{equation}
we find that 
\begin{align*} 
\frac{P'(t,u,v,\kappa)}{i \exp(v i \theta(t))} 
&= \big\{v \theta'(t) - \log(T^{u})\big\} M(\tfrac{1}{2} + it) 
\zeta(\tfrac{1}{2} + it) \zeta(\tfrac{1}{2} + it + \tfrac{i \kappa}{\log{T}}) \\
& + N(\tfrac{1}{2} + it) \zeta(\tfrac{1}{2} + it) \zeta(\tfrac{1}{2} + it + \tfrac{i \kappa}{\log{T}}) \nonumber \\
&+ M(\tfrac{1}{2} + it) 
\zeta'(\tfrac{1}{2} + it) \zeta(\tfrac{1}{2} + it + \tfrac{i \kappa}{\log{T}}) \nonumber \\
&+ M(\tfrac{1}{2} + it) 
\zeta(\tfrac{1}{2} + it) \zeta'(\tfrac{1}{2} + it + \tfrac{i \kappa}{\log{T}}). \nonumber
\end{align*}
Thus
\begin{align} \label{f-der-squared}
&|P'(t,u,v,\kappa)|^{2} 
= \big\{v \theta'(t) - \log(T^{u})\big\}^{2} |M(\tfrac{1}{2} + it)|^{2} 
|\zeta(\tfrac{1}{2} + it)|^{2} 
|\zeta(\tfrac{1}{2} + it + \tfrac{i \kappa}{\log{T}})|^{2} \\
& + 2 \big\{v \theta'(t) - \log(T^{u})\big\} \mathrm{Re}
\Big[ M(\tfrac{1}{2} + it) N(\tfrac{1}{2} - it)
|\zeta(\tfrac{1}{2} + it)|^{2} 
|\zeta(\tfrac{1}{2} + it + \tfrac{i \kappa}{\log{T}})|^{2} \Big] \nonumber \\ 
& + |N(\tfrac{1}{2} + it)|^{2} 
|\zeta(\tfrac{1}{2} + it)|^{2} 
|\zeta(\tfrac{1}{2} + it + \tfrac{i \kappa}{\log{T}})|^{2} \nonumber \\
& + 2 \big\{v \theta'(t) - \log(T^{u})\big\} \mathrm{Re}
\Big[ |M(\tfrac{1}{2} + it)|^{2}
\zeta'(\tfrac{1}{2} + it) \zeta(\tfrac{1}{2} - it)
|\zeta(\tfrac{1}{2} + it + \tfrac{i \kappa}{\log{T}})|^{2} \Big] \nonumber \\
& + 2 \big\{v \theta'(t) - \log(T^{u})\big\} \mathrm{Re}
\Big[ |M(\tfrac{1}{2} + it)|^{2}
|\zeta(\tfrac{1}{2} + it)|^{2} 
\zeta'(\tfrac{1}{2} + it + \tfrac{i \kappa}{\log{T}}) 
\zeta(\tfrac{1}{2} - it - \tfrac{i \kappa}{\log{T}}) \Big] \nonumber \\
& + |M(\tfrac{1}{2} + it)|^{2} 
|\zeta'(\tfrac{1}{2} + it)|^{2} 
|\zeta(\tfrac{1}{2} + it + \tfrac{i \kappa}{\log{T}})|^{2} \nonumber \\
& + |M(\tfrac{1}{2} + it)|^{2} 
|\zeta(\tfrac{1}{2} + it)|^{2} 
|\zeta'(\tfrac{1}{2} + it + \tfrac{i \kappa}{\log{T}})|^{2} \nonumber \\
& + 2 \mathrm{Re} \Big[ |M(\tfrac{1}{2} + it)|^{2} 
\zeta'(\tfrac{1}{2} + it) \zeta(\tfrac{1}{2} - it) 
\zeta(\tfrac{1}{2} + it + \tfrac{i \kappa}{\log{T}}) 
\zeta'(\tfrac{1}{2} - it - \tfrac{i \kappa}{\log{T}}) \Big] \nonumber \\
& + 2 \mathrm{Re} \Big[ M(\tfrac{1}{2} + it) N(\tfrac{1}{2} - it) 
\zeta'(\tfrac{1}{2} + it) \zeta(\tfrac{1}{2} - it) 
|\zeta(\tfrac{1}{2} + it + \tfrac{i \kappa}{\log{T}})|^{2} \Big] \nonumber \\
& + 2 \mathrm{Re} \Big[ M(\tfrac{1}{2} + it) N(\tfrac{1}{2} - it) 
|\zeta(\tfrac{1}{2} + it)|^{2} 
\zeta'(\tfrac{1}{2} + it + \tfrac{i \kappa}{\log{T}}) 
\zeta(\tfrac{1}{2} - it - \tfrac{i \kappa}{\log{T}}) \Big]. \nonumber 
\end{align}

For future need we now introduce some notation. Let 
\begin{equation} \label{integral-namn-A}
A_{\pm} = \int\limits_{-\infty}^{\infty} 
w_{\pm}(t) |M(\tfrac{1}{2} + it)|^{2} 
|\zeta(\tfrac{1}{2} + it)|^{2} 
|\zeta(\tfrac{1}{2} + it + \tfrac{i \kappa}{\log{T}})|^{2} \; dt, 
\end{equation}
\begin{equation} \label{integral-namn-B}
B_{\pm} = \int\limits_{-\infty}^{\infty} 
w_{\pm}(t) \mathrm{Re}
\Big[ M(\tfrac{1}{2} + it) N(\tfrac{1}{2} - it)
|\zeta(\tfrac{1}{2} + it)|^{2} 
|\zeta(\tfrac{1}{2} + it + \tfrac{i \kappa}{\log{T}})|^{2} \Big] \; dt, 
\end{equation}
\begin{equation} \label{integral-namn-C}
C_{\pm} = \int\limits_{-\infty}^{\infty} 
w_{\pm}(t) |N(\tfrac{1}{2} + it)|^{2} 
|\zeta(\tfrac{1}{2} + it)|^{2} 
|\zeta(\tfrac{1}{2} + it + \tfrac{i \kappa}{\log{T}})|^{2} \; dt, 
\end{equation}
\begin{equation} \label{integral-namn-D}
D_{\pm} = \int\limits_{-\infty}^{\infty} 
w_{\pm}(t) \mathrm{Re}
\Big[ |M(\tfrac{1}{2} + it)|^{2}
\zeta'(\tfrac{1}{2} + it) \zeta(\tfrac{1}{2} - it)
|\zeta(\tfrac{1}{2} + it + \tfrac{i \kappa}{\log{T}})|^{2} \Big] \; dt, 
\end{equation}
\begin{equation} \label{integral-namn-E}
E_{\pm} = \int\limits_{-\infty}^{\infty} 
w_{\pm}(t) \mathrm{Re}
\Big[ |M(\tfrac{1}{2} + it)|^{2}
|\zeta(\tfrac{1}{2} + it)|^{2} 
\zeta'(\tfrac{1}{2} + it + \tfrac{i \kappa}{\log{T}}) 
\zeta(\tfrac{1}{2} - it - \tfrac{i \kappa}{\log{T}}) \Big] \; dt, 
\end{equation}
\begin{equation} \label{integral-namn-F}
F_{\pm} = \int\limits_{-\infty}^{\infty} 
w_{\pm}(t) 
|M(\tfrac{1}{2} + it)|^{2} 
|\zeta'(\tfrac{1}{2} + it)|^{2} 
|\zeta(\tfrac{1}{2} + it + \tfrac{i \kappa}{\log{T}})|^{2} \; dt, 
\end{equation}
\begin{equation} \label{integral-namn-G}
G_{\pm} = \int\limits_{-\infty}^{\infty} 
w_{\pm}(t) |M(\tfrac{1}{2} + it)|^{2} 
|\zeta(\tfrac{1}{2} + it)|^{2} 
|\zeta'(\tfrac{1}{2} + it + \tfrac{i \kappa}{\log{T}})|^{2} \; dt, 
\end{equation}
\begin{equation} \label{integral-namn-H}
H_{\pm} = \int\limits_{-\infty}^{\infty} 
w_{\pm}(t) \mathrm{Re} \Big[ |M(\tfrac{1}{2} + it)|^{2} 
\zeta'(\tfrac{1}{2} + it) \zeta(\tfrac{1}{2} - it) 
\zeta(\tfrac{1}{2} + it + \tfrac{i \kappa}{\log{T}}) 
\zeta'(\tfrac{1}{2} - it - \tfrac{i \kappa}{\log{T}}) \Big] \; dt, 
\end{equation}
\begin{equation} \label{integral-namn-I}
I_{\pm} = \int\limits_{-\infty}^{\infty} 
w_{\pm}(t) \mathrm{Re} \Big[ M(\tfrac{1}{2} + it) N(\tfrac{1}{2} - it) 
\zeta'(\tfrac{1}{2} + it) \zeta(\tfrac{1}{2} - it) 
|\zeta(\tfrac{1}{2} + it + \tfrac{i \kappa}{\log{T}})|^{2} \Big] \; dt 
\end{equation}
and
\begin{equation} \label{integral-namn-J}
J_{\pm} = \int\limits_{-\infty}^{\infty} 
w_{\pm}(t) \mathrm{Re} \Big[ M(\tfrac{1}{2} + it) N(\tfrac{1}{2} - it) 
|\zeta(\tfrac{1}{2} + it)|^{2} 
\zeta'(\tfrac{1}{2} + it + \tfrac{i \kappa}{\log{T}}) 
\zeta(\tfrac{1}{2} - it - \tfrac{i \kappa}{\log{T}}) \Big] \; dt. 
\end{equation}

\subsection{Evaluation of the integrals defined in Section \ref{subsection-giving-names}} 
\label{subsection-evaluation-svar}

The reader is referred to Section \ref{section-evaluation-integrals} for details on how to evaluate (asymptotically) the integrals (\ref{integral-namn-A})-(\ref{integral-namn-J}). Below we give the answers. 

\begin{equation} \label{integral-namn-A-svar}
A_{\pm} = A_{\kappa} \cdot a_{3} \cdot 
\Big[ \int_{-\infty}^{\infty} w_{\pm}(t) \; dt \Big] 
\cdot \log^{9}{T} + O(T \log^{8}{T}), 
\end{equation}
\begin{equation} \label{integral-namn-B-svar}
B_{\pm} = B_{\kappa} \cdot a_{3} \cdot 
\Big[ \int_{-\infty}^{\infty} w_{\pm}(t) \; dt \Big] 
\cdot \log^{10}{T} + O(T \log^{9}{T}), 
\end{equation}
\begin{equation} \label{integral-namn-C-svar}
C_{\pm} = C_{\kappa} \cdot a_{3} \cdot 
\Big[ \int_{-\infty}^{\infty} w_{\pm}(t) \; dt \Big] 
\cdot \log^{11}{T} + O(T \log^{10}{T}), 
\end{equation}
\begin{equation} \label{integral-namn-D-svar}
D_{\pm} = D_{\kappa} \cdot a_{3} \cdot 
\Big[ \int_{-\infty}^{\infty} w_{\pm}(t) \; dt \Big] 
\cdot \log^{10}{T} + O(T \log^{9}{T}), 
\end{equation}
\begin{equation} \label{integral-namn-E-svar}
E_{\pm} = E_{\kappa} \cdot a_{3} \cdot 
\Big[ \int_{-\infty}^{\infty} w_{\pm}(t) \; dt \Big] 
\cdot \log^{10}{T} + O(T \log^{9}{T}), 
\end{equation}
\begin{equation} \label{integral-namn-F-svar}
F_{\pm} = F_{\kappa} \cdot a_{3} \cdot 
\Big[ \int_{-\infty}^{\infty} w_{\pm}(t) \; dt \Big] 
\cdot \log^{11}{T} + O(T \log^{10}{T}), 
\end{equation}
\begin{equation} \label{integral-namn-G-svar}
G_{\pm} = G_{\kappa} \cdot a_{3} \cdot 
\Big[ \int_{-\infty}^{\infty} w_{\pm}(t) \; dt \Big] 
\cdot \log^{11}{T} + O(T \log^{10}{T}), 
\end{equation}
\begin{equation} \label{integral-namn-H-svar}
H_{\pm} = H_{\kappa} \cdot a_{3} \cdot 
\Big[ \int_{-\infty}^{\infty} w_{\pm}(t) \; dt \Big] 
\cdot \log^{11}{T} + O(T \log^{10}{T}), 
\end{equation}
\begin{equation} \label{integral-namn-I-svar}
I_{\pm} = I_{\kappa} \cdot a_{3} \cdot 
\Big[ \int_{-\infty}^{\infty} w_{\pm}(t) \; dt \Big] 
\cdot \log^{11}{T} + O(T \log^{10}{T}) 
\end{equation}
and
\begin{equation} \label{integral-namn-J-svar}
J_{\pm} = J_{\kappa} \cdot a_{3} \cdot 
\Big[ \int_{-\infty}^{\infty} w_{\pm}(t) \; dt \Big] 
\cdot \log^{11}{T} + O(T \log^{10}{T}), 
\end{equation}
with 
\begin{equation} \label{L-definitionen}
a_{3} = \displaystyle\prod_{p} 
\Big\{ \Big(1 + \frac{4}{p} + \frac{1}{p^{2}}\Big)
\Big(1 - \frac{1}{p}\Big)^{4} \Big\}
\end{equation}
and where the constants are given by
\begin{align} \label{integral-namn-A-svarrrr}
&A_{\kappa} = \frac{(-10)}{\kappa^{8}} 
+ \frac{(2u - 2u^{2} + \frac{u^{3}}{3})}{\kappa^{6}} 
+ \frac{(\frac{u^{3}}{3} - \frac{u^{4}}{4})}{\kappa^{4}} 
+ \frac{8 \sin(\kappa u)}{\kappa^{9}} 
+ \frac{(10 - 8u) \cos(\kappa u)}{\kappa^{8}} \\
&+ \frac{(-4 + 10u - 4u^{2}) \sin(\kappa u)}{\kappa^{7}} 
+ \frac{(2u - 3u^{2} + u^{3}) \cos(\kappa u)}{\kappa^{6}}  
+ \frac{(-8) \sin{\kappa}}{\kappa^{9}}  
+ \frac{(\frac{-u^{3}}{3}) \cos{\kappa}}{\kappa^{6}} \nonumber \\ 
&+ \frac{8 \sin(\kappa (1 - u))}{\kappa^{9}} 
+ \frac{(8u) \cos(\kappa (1 - u))}{\kappa^{8}} 
+ \frac{(-4u^{2}) \sin(\kappa (1 - u))}{\kappa^{7}}  
+ \frac{(-u^{3}) \cos(\kappa (1 - u))}{\kappa^{6}}, \nonumber  
\end{align}
\begin{align} \label{integral-namn-B-svarrrr}
&B_{\kappa} = \frac{(1 - 2u)}{\kappa^{8}} 
+ \frac{(- \frac{u^{3}}{3} + \frac{u^{4}}{6})}{\kappa^{6}} 
+ \frac{(\frac{u^{4}}{8} - \frac{u^{5}}{12})}{\kappa^{4}} 
+ \frac{(-1 + 2u) \cos(\kappa u)}{\kappa^{8}} \\
&+ \frac{(-u + 2u^{2} - \frac{u^{3}}{3}) \sin(\kappa u)}{\kappa^{7}} 
+ \frac{(\frac{u^{2}}{2} - \frac{2u^{3}}{3} + \frac{u^{4}}{6}) 
\cos(\kappa u)}{\kappa^{6}} 
+ \frac{(\frac{u^{3}}{3}) \sin{\kappa}}{\kappa^{7}}  
+ \frac{(\frac{-u^{4}}{6}) \cos{\kappa}}{\kappa^{6}} \nonumber \\ 
&+ \frac{(-\frac{u^{3}}{3}) \sin(\kappa (1 - u))}{\kappa^{7}}  
+ \frac{(-\frac{u^{4}}{6}) \cos(\kappa (1 - u))}{\kappa^{6}}, \nonumber  
\end{align}
\begin{align} \label{integral-namn-C-svarrrr}
&C_{\kappa} = \frac{(-20)}{\kappa^{10}} 
+ \frac{(2u - 2u^{2})}{\kappa^{8}} 
+ \frac{(-\frac{u^{4}}{6} + \frac{u^{5}}{15})}{\kappa^{6}} 
+ \frac{(\frac{u^{5}}{20} - \frac{u^{6}}{36})}{\kappa^{4}} 
+ \frac{12 \sin(\kappa u)}{\kappa^{11}} \\ 
&+ \frac{(20 - 12u) \cos(\kappa u)}{\kappa^{10}} 
+ \frac{(-6 + 20u - 6u^{2}) \sin(\kappa u)}{\kappa^{9}} 
+ \frac{(4u - 8u^{2} + 2u^{3}) \cos(\kappa u)}{\kappa^{8}} \nonumber \\ 
&+ \frac{(u^{2} - \frac{4u^{3}}{3} + \frac{u^{4}}{3}) 
\sin(\kappa u)}{\kappa^{7}} 
+ \frac{(-12) \sin{\kappa}}{\kappa^{11}}  
+ \frac{(\frac{u^{4}}{6}) \sin{\kappa}}{\kappa^{7}} 
+ \frac{(-\frac{u^{5}}{15}) \cos{\kappa}}{\kappa^{6}} \nonumber \\ 
&+ \frac{12 \sin(\kappa (1 - u))}{\kappa^{11}} 
+ \frac{(12u) \cos(\kappa (1 - u))}{\kappa^{10}} 
+ \frac{(-6u^{2}) \sin(\kappa (1 - u))}{\kappa^{9}} \nonumber \\ 
&+ \frac{(-2u^{3}) \cos(\kappa (1 - u))}{\kappa^{8}} 
+ \frac{(\frac{u^{4}}{3}) \sin(\kappa (1 - u))}{\kappa^{7}}, \nonumber  
\end{align}
\begin{equation} \label{integral-namn-D-svarrrr}
D_{\kappa} = -\frac{A_{\kappa}}{2}, \qquad \qquad \qquad \qquad \qquad \qquad \qquad \qquad \qquad \qquad \qquad \qquad \qquad  
\end{equation}
\begin{equation} \label{integral-namn-E-svarrrr}
E_{\kappa} = -\frac{A_{\kappa}}{2}, \qquad \qquad \qquad \qquad \qquad \qquad \qquad \qquad \qquad \qquad \qquad \qquad \qquad  
\end{equation}
\begin{align} \label{integral-namn-F-svarrrr}
&F_{\kappa} = \frac{(-66)}{\kappa^{10}} 
+ \frac{(-\frac{11}{3} + 8u - 8u^{2} + \frac{4u^{3}}{3})}{\kappa^{8}} 
+ \frac{(\frac{2u}{3} - \frac{2u^{2}}{3} + \frac{5u^{3}}{6} 
- \frac{2u^{4}}{3} - \frac{u^{5}}{60})}{\kappa^{6}} \\ 
&+ \frac{(\frac{u^{3}}{9} - \frac{u^{4}}{8} + \frac{u^{5}}{20} 
- \frac{u^{6}}{72})}{\kappa^{4}} 
+ \frac{84 \sin(\kappa u)}{\kappa^{11}} 
+ \frac{(66 - 84u) \cos(\kappa u)}{\kappa^{10}} 
+ \frac{(-16 + 66u - 42u^{2}) \sin(\kappa u)}{\kappa^{9}} \nonumber \\ 
&+ \frac{(\frac{11}{3} + 8u - 25u^{2} + \frac{38u^{3}}{3}) 
\cos(\kappa u)}{\kappa^{8}} 
+ \frac{(-\frac{4}{3} + \frac{11u}{3} - \frac{11u^{3}}{3} 
+ \frac{11u^{4}}{6}) \sin(\kappa u)}{\kappa^{7}} \nonumber \\ 
&+ \frac{(\frac{2u}{3} - \frac{7u^{2}}{6} + \frac{u^{3}}{2} 
+ \frac{u^{4}}{12} - \frac{u^{5}}{12}) \cos(\kappa u)}{\kappa^{6}} 
+ \frac{(-84) \sin{\kappa}}{\kappa^{11}} 
+ \frac{26 \cos{\kappa}}{\kappa^{10}} 
+ \frac{(-\frac{4u^{3}}{3}) \cos{\kappa}}{\kappa^{8}} \nonumber \\ 
&+ \frac{(-\frac{2u^{3}}{3} + \frac{u^{4}}{3}) \sin{\kappa}}{\kappa^{7}} 
+ \frac{(-\frac{u^{4}}{12} + \frac{u^{5}}{60}) \cos{\kappa}}{\kappa^{6}}  
+ \frac{84 \sin(\kappa (1 - u))}{\kappa^{11}} 
+ \frac{(-26 + 84u) \cos(\kappa (1 - u))}{\kappa^{10}} \nonumber \\ 
&+ \frac{(26u - 42u^{2}) \sin(\kappa (1 - u))}{\kappa^{9}} 
+ \frac{(13u^{2} - \frac{38u^{3}}{3}) \cos(\kappa (1 - u))}{\kappa^{8}} \nonumber \\
&+ \frac{(-\frac{11u^{3}}{3} + \frac{11u^{4}}{6}) 
\sin(\kappa (1 - u))}{\kappa^{7}} 
+ \frac{(-\frac{u^{4}}{3} + \frac{u^{5}}{12}) \cos(\kappa (1 - u))}{\kappa^{6}}, \nonumber  
\end{align}
\begin{align} \label{integral-namn-G-svarrrr}
&G_{\kappa} = \frac{(-148)}{\kappa^{10}} 
+ \frac{(-\frac{14}{3} + 18u - 18u^{2} + 3u^{3})}{\kappa^{8}} 
+ \frac{(\frac{2u}{3} - u^{2} + \frac{11u^{3}}{6} 
- \frac{7u^{4}}{6})}{\kappa^{6}} 
+ \frac{(\frac{u^{3}}{9} - \frac{u^{4}}{12})}{\kappa^{4}} \\ 
&+ \frac{152 \sin(\kappa u)}{\kappa^{11}} 
+ \frac{(148 - 152u) \cos(\kappa u)}{\kappa^{10}} 
+ \frac{(-40 + 148u - 76u^{2}) \sin(\kappa u)}{\kappa^{9}} \nonumber \\ 
&+ \frac{(\frac{14}{3} + 22u - 56u^{2} + \frac{67u^{3}}{3}) 
\cos(\kappa u)}{\kappa^{8}} 
+ \frac{(-\frac{4}{3} + \frac{14u}{3} + 2u^{2} - \frac{26u^{3}}{3} 
+ \frac{10u^{4}}{3}) \sin(\kappa u)}{\kappa^{7}} \nonumber \\ 
&+ \frac{(\frac{2u}{3} - \frac{4u^{2}}{3} + \frac{u^{3}}{2} 
+ \frac{u^{4}}{3} - \frac{u^{5}}{6}) \cos(\kappa u)}{\kappa^{6}} 
+ \frac{(-152) \sin{\kappa}}{\kappa^{11}} 
+ \frac{36 \cos{\kappa}}{\kappa^{10}} 
+ \frac{(-3u^{3}) \cos{\kappa}}{\kappa^{8}} \nonumber \\ 
&+ \frac{(-u^{3}) \sin{\kappa}}{\kappa^{7}} 
+ \frac{152 \sin(\kappa (1 - u))}{\kappa^{11}} 
+ \frac{(-36 + 152u) \cos(\kappa (1 - u))}{\kappa^{10}} \nonumber \\ 
&+ \frac{(36u - 76u^{2}) \sin(\kappa (1 - u))}{\kappa^{9}} 
+ \frac{(18u^{2} - \frac{67u^{3}}{3}) \cos(\kappa (1 - u))}{\kappa^{8}} \nonumber \\ 
&+ \frac{(-5u^{3} + \frac{10u^{4}}{3}) \sin(\kappa (1 - u))}{\kappa^{7}} 
+ \frac{(-\frac{u^{4}}{2} + \frac{u^{5}}{6}) \cos(\kappa (1 - u))}{\kappa^{6}}, \nonumber  
\end{align}
\begin{align} \label{integral-namn-H-svarrrr}
&H_{\kappa} = \frac{117}{\kappa^{10}} 
+ \frac{(-\frac{5}{2} - 14u + 14u^{2} - \frac{7u^{3}}{3})}{\kappa^{8}} 
+ \frac{(\frac{u}{2} - \frac{u^{2}}{2} - \frac{7u^{3}}{6} 
+ \frac{25u^{4}}{24})}{\kappa^{6}} 
+ \frac{(\frac{u^{3}}{12} - \frac{u^{4}}{16})}{\kappa^{4}} \\ 
&+ \frac{(-130) \sin(\kappa u)}{\kappa^{11}} 
+ \frac{(-117 + 130u) \cos(\kappa u)}{\kappa^{10}} 
+ \frac{(38 - 117u + 65u^{2}) \sin(\kappa u)}{\kappa^{9}} \nonumber \\ 
&+ \frac{(\frac{5}{2} - 24u + \frac{89u^{2}}{2} - \frac{58u^{3}}{3}) 
\cos(\kappa u)}{\kappa^{8}} 
+ \frac{(-1 + \frac{5u}{2} - 5u^{2} + 7u^{3} 
- \frac{17u^{4}}{6}) \sin(\kappa u)}{\kappa^{7}} \nonumber \\ 
&+ \frac{(\frac{u}{2} - \frac{3u^{2}}{4} + \frac{u^{3}}{2} 
- \frac{5u^{4}}{12} + \frac{u^{5}}{6}) \cos(\kappa u)}{\kappa^{6}} 
+ \frac{130 \sin{\kappa}}{\kappa^{11}} 
+ \frac{(-31) \cos{\kappa}}{\kappa^{10}} 
+ \frac{(-4) \sin{\kappa}}{\kappa^{9}} \nonumber \\ 
&+ \frac{(\frac{7u^{3}}{3}) \cos{\kappa}}{\kappa^{8}} 
+ \frac{(\frac{5u^{3}}{6} - \frac{u^{4}}{4}) \sin{\kappa}}{\kappa^{7}} 
+ \frac{(-\frac{u^{3}}{6} + \frac{u^{4}}{24}) \cos{\kappa}}{\kappa^{6}} 
+ \frac{(-130) \sin(\kappa (1 - u))}{\kappa^{11}} \nonumber \\ 
&+ \frac{(31 - 130u) \cos(\kappa (1 - u))}{\kappa^{10}} 
+ \frac{(4 - 31u + 65u^{2}) \sin(\kappa (1 - u))}{\kappa^{9}} \nonumber \\
&+ \frac{(4u - \frac{31u^{2}}{2} + \frac{58u^{3}}{3}) 
\cos(\kappa (1 - u))}{\kappa^{8}}  
+ \frac{(-2u^{2} + \frac{13u^{3}}{3} - \frac{17u^{4}}{6}) 
\sin(\kappa (1 - u))}{\kappa^{7}} \nonumber \\ 
&+ \frac{(-\frac{u^{3}}{2} + \frac{5u^{4}}{12} - \frac{u^{5}}{6}) 
\cos(\kappa (1 - u))}{\kappa^{6}}, \nonumber  
\end{align}
\begin{align} \label{integral-namn-I-svarrrr}
&I_{\kappa} = \frac{(-35)}{\kappa^{10}} 
+ \frac{(-\frac{1}{2} + 5u - 4u^{2} + \frac{2u^{3}}{3})}{\kappa^{8}} 
+ \frac{(\frac{5u^{3}}{12} - \frac{u^{4}}{4} + \frac{u^{5}}{40})}{\kappa^{6}} 
+ \frac{(-\frac{u^{4}}{16} + \frac{u^{5}}{20} 
- \frac{u^{6}}{144})}{\kappa^{4}} \\ 
&+ \frac{32 \sin(\kappa u)}{\kappa^{11}} 
+ \frac{(35 - 32u) \cos(\kappa u)}{\kappa^{10}} 
+ \frac{(-11 + 35u - 16u^{2}) \sin(\kappa u)}{\kappa^{9}} \nonumber \\ 
&+ \frac{(\frac{1}{2} + 6u - \frac{27u^{2}}{2} + \frac{14u^{3}}{3}) 
\cos(\kappa u)}{\kappa^{8}} 
+ \frac{(\frac{u}{2} + \frac{u^{2}}{2} - \frac{13u^{3}}{6} 
+ \frac{3u^{4}}{4}) \sin(\kappa u)}{\kappa^{7}} \nonumber \\ 
&+ \frac{(-\frac{u^{2}}{4} + \frac{u^{3}}{4} 
+ \frac{u^{4}}{24} - \frac{u^{5}}{24}) \cos(\kappa u)}{\kappa^{6}} 
+ \frac{(-32) \sin{\kappa}}{\kappa^{11}} 
+ \frac{5 \cos{\kappa}}{\kappa^{10}} 
+ \frac{(-\frac{2u^{3}}{3}) \cos{\kappa}}{\kappa^{8}} \nonumber \\ 
&+ \frac{(-\frac{u^{3}}{3} - \frac{u^{4}}{12}) \sin{\kappa}}{\kappa^{7}} 
+ \frac{(\frac{u^{4}}{8} - \frac{u^{5}}{40}) \cos{\kappa}}{\kappa^{6}} 
+ \frac{32 \sin(\kappa (1 - u))}{\kappa^{11}}  
+ \frac{(-5 + 32u) \cos(\kappa (1 - u))}{\kappa^{10}} \nonumber \\
&+ \frac{(5u - 16u^{2}) \sin(\kappa (1 - u))}{\kappa^{9}} 
+ \frac{(\frac{5u^{2}}{2} - \frac{14u^{3}}{3}) 
\cos(\kappa (1 - u))}{\kappa^{8}} 
+ \frac{(-\frac{u^{3}}{2} + \frac{3u^{4}}{4}) 
\sin(\kappa (1 - u))}{\kappa^{7}} \nonumber \\ 
&+ \frac{(\frac{u^{5}}{24}) \cos(\kappa (1 - u))}{\kappa^{6}} \nonumber  
\end{align}
and finally 
\begin{align} \label{integral-namn-J-svarrrr}
&J_{\kappa} = \frac{63}{\kappa^{10}} 
+ \frac{(-\frac{1}{2} - 6u + 7u^{2} - \frac{7u^{3}}{6})}{\kappa^{8}} 
+ \frac{(-\frac{u^{3}}{4} + \frac{u^{4}}{8})}{\kappa^{6}} 
+ \frac{(-\frac{u^{4}}{16} + \frac{u^{5}}{24})}{\kappa^{4}} 
+ \frac{(-50) \sin(\kappa u)}{\kappa^{11}} \\
&+ \frac{(-63 + 50u) \cos(\kappa u)}{\kappa^{10}} 
+ \frac{(20 - 63u + 25u^{2}) \sin(\kappa u)}{\kappa^{9}} 
+ \frac{(\frac{1}{2} - 14u + \frac{49u^{2}}{2} - \frac{43u^{3}}{6}) 
\cos(\kappa u)}{\kappa^{8}} \nonumber \\
&+ \frac{(\frac{u}{2} - 4u^{2} + \frac{14u^{3}}{3} 
- \frac{7u^{4}}{6}) \sin(\kappa u)}{\kappa^{7}} 
+ \frac{(-\frac{u^{2}}{4} + \frac{7u^{3}}{12} 
- \frac{5u^{4}}{12} + \frac{u^{5}}{12}) \cos(\kappa u)}{\kappa^{6}} 
+ \frac{50 \sin{\kappa}}{\kappa^{11}} \nonumber \\ 
&+ \frac{(-5) \cos{\kappa}}{\kappa^{10}} 
+ \frac{(\frac{7u^{3}}{6}) \cos{\kappa}}{\kappa^{8}} 
+ \frac{(\frac{u^{4}}{4}) \sin{\kappa}}{\kappa^{7}} 
+ \frac{(\frac{u^{4}}{24}) \cos{\kappa}}{\kappa^{6}} 
+ \frac{(-50) \sin(\kappa (1 - u))}{\kappa^{11}} \nonumber \\ 
&+ \frac{(5 - 50u) \cos(\kappa (1 - u))}{\kappa^{10}} 
+ \frac{(-5u + 25u^{2}) \sin(\kappa (1 - u))}{\kappa^{9}} 
+ \frac{(-\frac{5u^{2}}{2} + \frac{43u^{3}}{6}) 
\cos(\kappa (1 - u))}{\kappa^{8}} \nonumber \\ 
&+ \frac{(\frac{5u^{3}}{6} - \frac{7u^{4}}{6}) 
\sin(\kappa (1 - u))}{\kappa^{7}} 
+ \frac{(\frac{u^{4}}{6} - \frac{u^{5}}{12}) \cos(\kappa (1 - u))}{\kappa^{6}}. \nonumber  
\end{align}

\begin{remark} \label{cancellation-forstas}
If the above ten coefficients are seen as Laurent series in terms of $\kappa,$ then numerical calculations show that all coefficients for negative $\kappa$-powers equal zero. This had to be the case since our expressions are analytic in $\kappa.$ The latter can be seen from the fact that e.g.\ the LHS of (\ref{integral-namn-A-svar}) remains bounded if we let 
$\kappa \to 0.$ 
\end{remark}

\begin{remark} \label{A-kappa-u-ett}
When\footnote{Although the results in this article (via \cite{HY}) only are shown for $u < 1/11,$ it may be that they hold for $u < 1.$} we put $u = 1,$ the limit of $A_{\kappa}$ as $\kappa \to 0$ equals $\frac{42}{9!}.$ If we let the weight-function $w(t)$ be an approximation to the characteristic function on $[T,2T],$ then (\ref{integral-namn-A-svar}) is seen to be consistent with the conjecture that 
\begin{equation} \label{sixth-conj-ekv}
\int\limits_{T}^{2T} |\zeta(\tfrac{1}{2} + it)|^{6} \; dt 
\sim \frac{42}{9!} \cdot a_{3} \cdot T \log^{9}{T} 
\end{equation}
which of course is good news\footnote{However, putting 
$u = 1/2$ and letting $\kappa \to 0$ does not give half of the sixth power moment.}.
\end{remark}

\subsection{Obtaining an inequality in terms of $\kappa$} 
\label{subsection-obtaining-ineq}

It is now time to investigate both sides of Corollary \ref{wirtinger-sixth-val-cor}. Focusing on the RHS, we are lead to recall (\ref{f-der-squared}). Since $w_{+}(t)$ is supported in $[\frac{T}{2},4T]$ we may use that 
\begin{equation} \label{theta-der-prop}
\theta'(t) = \frac{\log{T}}{2} + O(1). 
\end{equation} 
The contribution to the integral in the RHS of (\ref{wirtinger-sixth-w-val-ekv}) coming from the error term in (\ref{theta-der-prop}) can be seen to be 
$\ll T \log^{10}{T}.$ To do this we simply use the Cauchy--Schwarz inequality 
\begin{equation*} \label{CS-ineq}
\Big| \int f(x) g(x) \; dx \Big|^{2} 
\leqslant \int |f(x)|^{2} \; dx \int |g(x)|^{2} \; dx.
\end{equation*} 

We can thus via (\ref{f-squared}) and (\ref{f-der-squared}) convert both sides of (\ref{wirtinger-sixth-w-val-ekv}) into expressions involving the integrals in (\ref{integral-namn-A})-(\ref{integral-namn-J}). The latter are of course evaluated using (\ref{integral-namn-A-svar})-(\ref{integral-namn-J-svar}). Explicitly this procedure yields 
\begin{align} \label{olikhet-sixth-w-fortf}
&A_{\kappa} \cdot a_{3} \cdot 
\Big[ \int_{-\infty}^{\infty} w_{-}(t) \; dt \Big] 
\cdot \log^{9}{T} + O(T \log^{8}{T}) \\ 
& \leqslant \Big(\frac{\kappa}{\pi}\Big)^{2} 
\cdot \Big\{ (v/2 - u)^{2} A_{\kappa} 
+ (v - 2u) B_{\kappa} 
+ C_{\kappa} + 
(v - 2u) D_{\kappa} 
+ (v - 2u)E_{\kappa} \nonumber \\
&+ F_{\kappa} 
+ G_{\kappa} 
+ 2 H_{\kappa} 
+ 2 I_{\kappa} 
+ 2 J_{\kappa} \Big\} \cdot a_{3} \cdot 
\Big[ \int_{-\infty}^{\infty} w_{+}(t) \; dt \Big] 
\cdot \log^{9}{T} + O(T \log^{8}{T}). \nonumber
\end{align}

Immediately from the definitons (\ref{w-minus-def}) and (\ref{w-plus-def}) one sees 
\begin{equation} \label{w-minus-ineq}
w_{-}(t) \geqslant \exp(-2\eta) \chi_{[T + 2T_{0},2T - 2T_{0}]}(t)
\end{equation}
and
\begin{equation} \label{w-plus-ineq}
w_{+}(t) \leqslant \exp(2\eta) \chi_{[T - T_{0},2T + T_{0}]}(t). 
\end{equation}
Therefore for sufficiently large $T$ we have (recall (\ref{T-noll-val})) 
\begin{equation} \label{w-minus-integral-ineq}
\int_{-\infty}^{\infty} w_{-}(t) \; dt \geqslant C_{-}(\eta) T 
\end{equation}
and
\begin{equation} \label{w-plus--integral-ineq}
\int_{-\infty}^{\infty} w_{+}(t) \; dt \leqslant C_{+}(\eta) T, 
\end{equation}
for some (fixed) constants $C_{\pm}(\eta),$ which can be chosen as close to $1$ as we like. 

Summarizing, we conclude that IF 
\begin{align} \label{olikhet-sixth-kappa}
A_{\kappa} > \Big(\frac{\kappa}{\pi}\Big)^{2} 
\cdot \Big\{ &(v/2 - u)^{2} A_{\kappa} 
+ (v - 2u) B_{\kappa} 
+ C_{\kappa} + 
(v - 2u) D_{\kappa} 
+ (v - 2u)E_{\kappa} \nonumber \\
&+ F_{\kappa} 
+ G_{\kappa} 
+ 2 H_{\kappa} 
+ 2 I_{\kappa} 
+ 2 J_{\kappa} \Big\}, 
\end{align}
then we have a contradiction to our Main Assumption. That would imply the existence of a subinterval of $[T,2T - \frac{\kappa}{\log{T}}]$ of length at least $\frac{\kappa}{\log{T}},$ in which the function 
$t \mapsto P(t,u,v,\kappa)$ has no zeros. A simple proof by contradiction shows that this implies that there must be a subinterval of $[T,2T]$ of length at least $\frac{2\kappa}{\log{T}},$ in which the function 
$t \mapsto \zeta(\tfrac{1}{2} + it)$ has no zeros. 

Using Mathematica, the inequality (\ref{olikhet-sixth-kappa}) is seen to hold with $u = 0.0909,$ $v = 2.13$ and $\kappa = 8.69.$ Thus Theorem \ref{huvudsats} holds since $2 \cdot 8.69 > 2.766 \cdot 2\pi.$

\begin{remark} \label{hall-remark} 
If one studies the $\kappa$-inequality (\ref{olikhet-sixth-kappa}) as $u \to 0,$ one sees that (\ref{olikhet-sixth-kappa}) is satisfied for $\kappa = 8.264$ 
(with $v = 2$), yielding gaps of length at least $2.63$ times the average. This is effectively what Hall did in \cite{hall-fyra} (he did not use any amplifier).
\end{remark} 

\begin{remark} \label{u-values-remark} 
For what it is worth, note that if the results (again via \cite{HY}) would remain valid for any $u < 1/2$ (whether this is the case or not is unknown), then one could take $u = 0.4999,$ $v = 2.68$ and $\kappa = 10.23$ and see that (\ref{olikhet-sixth-kappa}) holds. This would imply the existence of gaps of length at least $3.25$ times the average. Moreover, $u = 0.55$ and $v = 2.74$ would yield gaps of length at least $3.26$ times the average and 
$u = 0.9999$ and $v = 3$ would yield gaps of length at least $3.05$ times the average\footnote{Being unable to explain why using $u = 1/2$ leads to bigger gaps than $u = 1,$ let me just mention that this was also the case when I (admittedly on rough paper and using ratios conjectures) looked at how amplifying the second moment of the Riemann zeta-function improved Hall's method for finding large gaps.}. 
\end{remark} 

\begin{remark} \label{improvement-remark} 
As a side-note, it is likely that replacing $M(\frac{1}{2} + it)$ in the definition of our function $P(t,u,v,\kappa)$ in (\ref{f-def-hall-ekv}) by 
\begin{equation*} \label{improvement-ide}
\sum_{h \leqslant T^{u}} 
\frac{A + B \frac{\log(T^{u}/h)}{\log(T^{u})}}{h^{1/2 + it}},
\end{equation*}
with some suitable choice of $A$ and $B,$ would have lead to a slightly better gap-result. However, such calculations would be very long. 
\end{remark} 

\section{Evaluation of our integrals} \label{section-evaluation-integrals}

\subsection{On the article ``The twisted fourth moment of the Riemann zeta function"} 
\label{subsection-on-the-article}

We will make use of the main theorem in the article ``The twisted fourth moment of the Riemann zeta function", written by Hughes and Young \cite{HY}. Before we reproduce their result, we must introduce a little bit of notation. 

Define 
\begin{equation} \label{A-def}
A_{\alpha,\beta,\gamma,\delta}(s) = \frac{\zeta(1 + s + \alpha + \gamma) 
\zeta(1 + s + \alpha + \delta) \zeta(1 + s + \beta + \gamma) 
\zeta(1 + s + \beta + \delta) }
{\zeta(2 + 2s + \alpha + \beta + \gamma + \delta)}.
\end{equation}
Let 
\begin{equation} \label{div-def}
\sigma_{\alpha,\beta}(n) = \sum_{n_{1}n_{2} = n} 
n_{1}^{-\alpha} n_{2}^{-\beta}.
\end{equation}
Next, suppose $(h,k) = 1,$ $p^{h_{p}}||h$ and $p^{k_{p}}||k,$ and define 
\begin{align} \label{B-euler-def}
B_{\alpha,\beta,\gamma,\delta,h,k}(s) &= \displaystyle\prod_{p|h} 
\bigg( \frac{ \sum_{j=0}^{\infty} \sigma_{\alpha,\beta}(p^{j}) 
\sigma_{\gamma,\delta}(p^{j + h_{p}}) p^{-j(s+1)} }
{\sum_{j=0}^{\infty} \sigma_{\alpha,\beta}(p^{j}) 
\sigma_{\gamma,\delta}(p^{j}) p^{-j(s+1)}} \bigg) \\
& \qquad \qquad \qquad \times \displaystyle\prod_{p|k} 
\bigg( \frac{ \sum_{j=0}^{\infty} \sigma_{\alpha,\beta}(p^{j + k_{p}}) 
\sigma_{\gamma,\delta}(p^{j}) p^{-j(s+1)} }
{\sum_{j=0}^{\infty} \sigma_{\alpha,\beta}(p^{j}) 
\sigma_{\gamma,\delta}(p^{j}) p^{-j(s+1)}} \bigg). \nonumber
\end{align}
Then we write
\begin{equation} \label{Z-sixth-def}
Z_{\alpha,\beta,\gamma,\delta,h,k}(s) 
= A_{\alpha,\beta,\gamma,\delta}(s) 
B_{\alpha,\beta,\gamma,\delta,h,k}(s).
\end{equation}

\begin{theorem}[Main theorem in \cite{HY}] \label{sats1}
Let 
\begin{equation} \label{I-art-def}
I(h,k) = \int_{-\infty}^{\infty} \Big( \frac{h}{k} \Big)^{-it} 
\zeta(\tfrac{1}{2} + \alpha + it) \zeta(\tfrac{1}{2} + \beta + it) 
\zeta(\tfrac{1}{2} + \gamma - it) \zeta(\tfrac{1}{2} + \delta - it) 
w(t) \; dt,
\end{equation}
where $w(t)$ is a smooth, non-negative function with support contained in 
$[\frac{T}{2}, 4T],$ satisfying $w^{(j)}(t) \ll_{j} T_{0}^{-j}$ for all 
$j = 0, 1, 2,...,$ where $T^{\frac{1}{2} + \epsilon} \ll T_{0} \ll T.$ Suppose $(h,k) = 1,$ $hk~\leqslant~T^{\frac{2}{11} - \epsilon},$ and that $\alpha, \beta, \gamma, \delta$ are complex numbers $\ll (\log{T})^{-1}.$ Then 
\begin{align} \label{sats1-ekv}
I(h,k) &= \frac{1}{\sqrt{hk}} \int_{-\infty}^{\infty} w(t) 
\bigg( Z_{\alpha,\beta,\gamma,\delta,h,k}(0) 
+ \Big( \frac{t}{2 \pi} \Big)^{-\alpha - \beta - \gamma - \delta} Z_{-\gamma,-\delta,-\alpha,-\beta,h,k}(0) \\
&+ \Big( \frac{t}{2 \pi} \Big)^{-\alpha - \gamma} Z_{-\gamma,\beta,-\alpha,\delta,h,k}(0) + 
\Big( \frac{t}{2 \pi} \Big)^{-\alpha - \delta} Z_{-\delta,\beta,\gamma,-\alpha,h,k}(0) \nonumber \\
&+ \Big( \frac{t}{2 \pi} \Big)^{-\beta - \gamma} Z_{\alpha,-\gamma,-\beta,\delta,h,k}(0) + 
\Big( \frac{t}{2 \pi} \Big)^{-\beta - \delta} Z_{\alpha,-\delta,\gamma,-\beta,h,k}(0) \bigg) \; dt \nonumber \\
&+ O(T^{\frac{3}{4} + \epsilon} (hk)^{\frac{7}{8}} (T/T_{0})^{\frac{9}{4}}). \nonumber 
\end{align}
\end{theorem}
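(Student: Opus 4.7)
The plan is to evaluate $I(h,k)$ by inserting an approximate functional equation (AFE) for the integrand, splitting the resulting double sums into a diagonal (main) and an off-diagonal (error) part, and showing that the diagonal reproduces the six $Z$-terms while the off-diagonal is absorbed into the stated error.

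First I would expand the product
$\zeta(\tfrac{1}{2}+\alpha+it)\zeta(\tfrac{1}{2}+\beta+it)\zeta(\tfrac{1}{2}+\gamma-it)\zeta(\tfrac{1}{2}+\delta-it)$
by AFE as a sum of six pieces indexed by the orbits under the symmetries $(\alpha\leftrightarrow\beta)$, $(\gamma\leftrightarrow\delta)$, and the functional-equation swap $(\alpha,\beta)\leftrightarrow(-\gamma,-\delta)$. Each piece has the shape
\[
\sum_{m,n\geq 1}\frac{a(m)\,b(n)}{\sqrt{mn}}\Big(\frac{n}{m}\Big)^{it}\Big(\frac{t}{2\pi}\Big)^{-S}V\!\left(\frac{mn}{(t/2\pi)^{2}}\right),
\]
where $a, b$ are shifted divisor functions of the form $\sigma_{\alpha',\beta'}$ with $\alpha', \beta'$ determined by the orbit, $S$ is the corresponding sum of shift-parameters (one of $0,\ \alpha+\beta+\gamma+\delta,\ \alpha+\gamma,\ \alpha+\delta,\ \beta+\gamma,\ \beta+\delta$), and $V$ is a smooth cutoff coming from the AFE test function. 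Inserting into $I(h,k)$ and absorbing the twist $(h/k)^{-it}$, each orbit produces
\[
\frac{1}{\sqrt{hk}}\sum_{m,n}\frac{a(m)\,b(n)}{\sqrt{mn/(hk)}}\int w(t)\Big(\frac{nh}{mk}\Big)^{it}V_{\text{orbit}}(t,m,n)\;dt.
\]

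The diagonal $hm=kn$, combined with $(h,k)=1$, forces $m=k\ell$, $n=h\ell$ for $\ell\geq 1$. Summing over $\ell$ produces a Dirichlet series whose Euler factors I would compute separately for $p\nmid hk$, $p\mid h$, and $p\mid k$, and then factorise as $A_{\alpha,\beta,\gamma,\delta}(0)\cdot B_{\alpha,\beta,\gamma,\delta,h,k}(0)$ via (\ref{A-def})--(\ref{B-euler-def}). Doing this for each of the six orbits and reading off the accompanying power of $t/(2\pi)$ from $S$ reproduces all six $Z$-terms in (\ref{sats1-ekv}).

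The hard part will be controlling the off-diagonal $hm\neq kn$, which is a twisted shifted-convolution of generalised divisor functions. I would attack it by the $\delta$-method of Duke--Friedlander--Iwaniec: detect the condition $hm-kn=r$ for each $r\neq 0$ by a smooth sum of additive characters modulo $c$ over $c$ in a suitable range, apply Voronoi summation to both the $m$- and $n$-sums to swap length with conductor, and then estimate the resulting bilinear form of Kloosterman sums by Weil's bound (or by Kuznetsov's trace formula, as in Motohashi's untwisted fourth moment). Optimising the AFE cutoff against the Weil bound yields the error $O(T^{3/4+\epsilon}(hk)^{7/8}(T/T_{0})^{9/4})$; the factor $(hk)^{7/8}$ reflects the cost of two Voronoi inversions in the presence of the twist, and requiring this error to stay below the main term of size $T/\sqrt{hk}$ is precisely what forces the hypothesis $hk\leq T^{2/11-\epsilon}$.
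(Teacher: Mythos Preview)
Your outline diverges from the paper's (and from Hughes--Young's actual proof) at a crucial structural point. The approximate functional equation used here, displayed in (\ref{zetas-AFE}), produces \emph{two} sums, not six: one direct sum and one ``dual'' sum weighted by $X_{\alpha,\beta,\gamma,\delta,t}$. There is no rigorous AFE for the four-fold product that splits naturally into six orbit-indexed pieces; the six-term structure you describe is the shape of the CFKRS recipe, which is a heuristic prediction, not an identity one can insert into $I(h,k)$.

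Consequently your division of labour between diagonal and off-diagonal is wrong. From the two-term AFE, the diagonal $hm=kn$ contributes only the two $Z$-terms corresponding to the trivial swap and the full swap $(\alpha,\beta)\leftrightarrow(-\gamma,-\delta)$. The remaining four $Z$-terms (those with the ``mixed'' exponents $-\alpha-\gamma$, $-\alpha-\delta$, $-\beta-\gamma$, $-\beta-\delta$) do \emph{not} come from any diagonal: they are extracted as genuine main terms from the off-diagonal shifted-convolution sums after the Duke--Friedlander--Iwaniec machinery is applied and the resulting expressions are simplified. This extraction---identifying and isolating four additional main terms hidden inside what you have labelled ``error'', and then showing the residual really is $O(T^{3/4+\epsilon}(hk)^{7/8}(T/T_0)^{9/4})$---is exactly what the paper calls ``a nice achievement'' and what occupies most of \cite{HY}. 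Your sketch treats the off-diagonal as pure error, which would lose these four terms and make the stated asymptotic false.
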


\textbf{Brief comment on the proof:} The proof is very complicated and the reader is referred to \cite{HY}. What follows is just a very brief outline.

The starting point is an approximate functional equation. 
Let $G(s)$ be an even, entire function of rapid decay as $|s| \to \infty$ in any fixed strip $|\textrm{Re}(s)| \leqslant C$ and let 
\begin{equation} \label{V-AFE-def}
V_{\alpha,\beta,\gamma,\delta,t}(x) = \frac{1}{2 \pi i} 
\int_{(1)} \frac{G(s)}{s} g_{\alpha,\beta,\gamma,\delta}(s,t) x^{-s} \; ds,
\end{equation}
where
\begin{equation} \label{g-AFE-def}
g_{\alpha,\beta,\gamma,\delta}(s,t) 
= \frac{\Gamma\Big( \frac{\frac{1}{2} + \alpha + s + it}{2} \Big)}
{\Gamma\Big( \frac{\frac{1}{2} + \alpha + it}{2} \Big)} 
\frac{\Gamma\Big( \frac{\frac{1}{2} + \beta + s + it}{2} \Big)}
{\Gamma\Big( \frac{\frac{1}{2} + \beta + it}{2} \Big)} 
\frac{\Gamma\Big( \frac{\frac{1}{2} + \gamma + s - it}{2} \Big)}
{\Gamma\Big( \frac{\frac{1}{2} + \gamma - it}{2} \Big)} 
\frac{\Gamma\Big( \frac{\frac{1}{2} + \delta + s - it}{2} \Big)}
{\Gamma\Big( \frac{\frac{1}{2} + \delta - it}{2} \Big)}. 
\end{equation}
Then
\begin{align} \label{zetas-AFE}
&\zeta(\tfrac{1}{2} + \alpha + it) \zeta(\tfrac{1}{2} + \beta + it) 
\zeta(\tfrac{1}{2} + \gamma - it) \zeta(\tfrac{1}{2} + \delta - it) \\
&= \sum_{m,n \geqslant 1} 
\frac{\sigma_{\alpha,\beta}(m)\sigma_{\gamma,\delta}(n)}{(mn)^{1/2}} 
\Big( \frac{m}{n} \Big)^{-it} 
V_{\alpha,\beta,\gamma,\delta,t}(\pi^{2} mn) \nonumber \\
&\qquad \qquad + X_{\alpha,\beta,\gamma,\delta,t} \sum_{m,n \geqslant 1} 
\frac{\sigma_{-\gamma,-\delta}(m)\sigma_{-\alpha,-\beta}(n)}{(mn)^{1/2}} 
\Big( \frac{m}{n} \Big)^{-it} 
V_{-\gamma,-\delta,-\alpha,-\beta,t}(\pi^{2} mn) \nonumber \\
&\qquad \qquad + O((1 + |t|)^{-2007}), \nonumber
\end{align}
where
\begin{equation} \label{X-AFE-def}
X_{\alpha,\beta,\gamma,\delta,t} :=
\pi^{\alpha + \beta + \gamma + \delta} 
\frac{\Gamma\Big( \frac{\frac{1}{2} - \alpha - it}{2} \Big)}
{\Gamma\Big( \frac{\frac{1}{2} + \alpha + it}{2} \Big)} 
\frac{\Gamma\Big( \frac{\frac{1}{2} - \beta - it}{2} \Big)}
{\Gamma\Big( \frac{\frac{1}{2} + \beta + it}{2} \Big)} 
\frac{\Gamma\Big( \frac{\frac{1}{2} - \gamma + it}{2} \Big)}
{\Gamma\Big( \frac{\frac{1}{2} + \gamma - it}{2} \Big)} 
\frac{\Gamma\Big( \frac{\frac{1}{2} - \delta + it}{2} \Big)}
{\Gamma\Big( \frac{\frac{1}{2} + \delta - it}{2} \Big)}. 
\end{equation}

Using the approximate functional equation (\ref{zetas-AFE}) in the definition of $I(h,k)$ (see (\ref{I-art-def})) yields
\begin{align} \label{I-formel}
I(h,k) &= \sum_{m,n \geqslant 1} 
\frac{\sigma_{\alpha,\beta}(m)\sigma_{\gamma,\delta}(n)}{(mn)^{1/2}} 
\int_{-\infty}^{\infty} \Big( \frac{hm}{kn} \Big)^{-it} 
V_{\alpha,\beta,\gamma,\delta,t}(\pi^{2} mn) w(t) \; dt \\
& + \sum_{m,n \geqslant 1} \Big\{
\frac{\sigma_{-\gamma,-\delta}(m)\sigma_{-\alpha,-\beta}(n)}{(mn)^{1/2}} \nonumber \\
&\times \int_{-\infty}^{\infty} \Big( \frac{hm}{kn} \Big)^{-it} 
X_{\alpha,\beta,\gamma,\delta,t} 
V_{-\gamma,-\delta,-\alpha,-\beta,t}(\pi^{2} mn) w(t) \; dt \Big\} + O(1). \nonumber
\end{align}
The two main terms in (\ref{I-formel}) can be treated similarly. Denoting the first one by $I^{(1)}(h,k),$ then upon opening up the integral formula for $V,$ one finds that 
\begin{align} \label{I-ett-ekv}
&I^{(1)}(h,k) = \sum_{m,n \geqslant 1} \Big\{
\frac{\sigma_{\alpha,\beta}(m)\sigma_{\gamma,\delta}(n)}{(mn)^{1/2}} \\
&\times \frac{1}{2 \pi i} \int_{(1)} \frac{G(s)}{s} (\pi^{2} m n)^{-s}
\int_{-\infty}^{\infty} \Big( \frac{hm}{kn} \Big)^{-it} 
g_{\alpha,\beta,\gamma,\delta}(s,t) w(t) \; dt \; ds \Big\}. \nonumber
\end{align}

The authors of \cite{HY} split the sum in (\ref{I-ett-ekv}) into the diagonal part corresponding to the terms for which $hm = kn$ and the non-diagonal part (the other terms). Whereas it is relatively simple to treat the diagonal contribution, it was a nice achievement to be able to treat the more complicated off-diagonal terms (results from the article \cite{DFI} by Duke, Friedlander and Iwaniec are used and the latter part of the proof of Theorem \ref{sats1} involves a lot of simplifying).

\begin{remark} \label{T-noll-val-remark}
Throughout this article we will always use the choice 
$T_{0} = T^{1 - \epsilon}$ and the practice of letting $\epsilon$ stand for a small positive number, not necessarily always the same.
\end{remark}

\begin{remark} \label{analyticity-remark}
It is not immediately obvious that the main term in (\ref{sats1-ekv}) is an analytic function in terms of the shifts (e.g.\ 
$Z_{\alpha,\beta,\gamma,\delta}(0)$ has singularities at 
$\alpha = -\gamma,$ $\alpha = -\delta,$ $\beta = -\gamma,$ $\beta = -\delta$), however, due to nice cancellation, analyticity holds. Lemma 2.5.1 in the article \cite{CFKRS} by Conrey, Farmer, Keating, Rubinstein and Snaith is very helpful when showing this. In Section \ref{subsection-studying-Q} we will carry out the details in a similar situation.
\end{remark}

\subsection{Initial step in using Theorem \ref{sats1}} 
\label{subsection-initial-step}

Suppose that $w(t)$ is a function satisfying the assumptions in Theorem \ref{sats1}, given the choice $T_{0} = T^{1-\epsilon}.$ 
With
\begin{equation} \label{m-def-ett-kap-tva}
M_{1}(s) = \sum_{h \leqslant T^{u}} \frac{a_{1}(h)}{h^{s}}
\end{equation}
and
\begin{equation} \label{m-def-tva-kap-tva}
M_{2}(s) = \sum_{k \leqslant T^{u}} \frac{a_{2}(k)}{k^{s}}, 
\end{equation}
where $0 < u < 1/11$ and the $a_{i}$-coefficients are real, we will want to asymptotically evaluate expressions such as 
\begin{equation} \label{integral-bra}
\int_{-\infty}^{\infty} M_{1}(\tfrac{1}{2} + it) M_{2}(\tfrac{1}{2} - it) 
\zeta(\tfrac{1}{2} + \alpha + it) \zeta(\tfrac{1}{2} + \beta + it) 
\zeta(\tfrac{1}{2} + \gamma - it) \zeta(\tfrac{1}{2} + \delta - it) 
w(t) \; dt.
\end{equation}
By expanding out $M_{1}$ and $M_{2}$ we obtain 
\begin{equation} \label{ms-expanded-ett}
\sum_{h,k \leqslant T^{u}} \frac{a_{1}(h)a_{2}(k)}{\sqrt{hk}} 
\int_{-\infty}^{\infty} \Big( \frac{h}{k} \Big)^{-it}
\zeta(\tfrac{1}{2} + \alpha + it) \zeta(\tfrac{1}{2} + \beta + it) 
\zeta(\tfrac{1}{2} + \gamma - it) \zeta(\tfrac{1}{2} + \delta - it) 
w(t) \; dt. 
\end{equation}
Let us write (\ref{sats1-ekv}) as 
\begin{equation} \label{I-J-E}
I(h,k) = J(h,k) + E(h,k). 
\end{equation}  
Then (\ref{ms-expanded-ett}) equals 
\begin{align} \label{ms-expanded-tva}
&\sum_{h,k \leqslant T^{u}} \frac{a_{1}(h)a_{2}(k)}{\sqrt{hk}} 
I(h,k) \nonumber \\
& = \sum_{m \leqslant T^{u}} \frac{1}{m} 
\sum_{\substack{h,k \leqslant T^{u}/m \\ (h,k) = 1}} 
\frac{a_{1}(hm)a_{2}(km)}{\sqrt{hk}} I(h,k) \nonumber \\
& = \sum_{m \leqslant T^{u}} \frac{1}{m} 
\sum_{\substack{h,k \leqslant T^{u}/m \\ (h,k) = 1}} 
\frac{a_{1}(hm)a_{2}(km)}{\sqrt{hk}} J(h,k) 
+ \sum_{m \leqslant T^{u}} \frac{1}{m} 
\sum_{\substack{h,k \leqslant T^{u}/m \\ (h,k) = 1}} 
\frac{a_{1}(hm)a_{2}(km)}{\sqrt{hk}} E(h,k).
\end{align}
The first term in (\ref{ms-expanded-tva}) clearly equals 
\begin{align} \label{huvuddel-summor}
&\sum_{m \leqslant T^{u}} \frac{1}{m} 
\sum_{h,k \leqslant T^{u}/m} 
\frac{a_{1}(hm)a_{2}(km)}{\sqrt{hk}} J(h,k)
\sum_{d | (h,k)} \mu(d) \nonumber \\
& = \sum_{m \leqslant T^{u}} \frac{1}{m} 
\sum_{d \leqslant T^{u}/m} \frac{\mu(d)}{d}
\sum_{h,k \leqslant T^{u}/md} 
\frac{a_{1}(hmd)a_{2}(kmd)}{\sqrt{hk}} J(hd,kd)
\nonumber \\
& = \sum_{d \leqslant T^{u}} \frac{\mu(d)}{d} 
\sum_{m \leqslant T^{u}/d} \frac{1}{m}
\sum_{h,k \leqslant T^{u}/md} 
\frac{a_{1}(hmd)a_{2}(kmd)}{\sqrt{hk}} J(hd,kd). 
\end{align}
The second term in (\ref{ms-expanded-tva}) is 
\begin{equation} \label{feldel-summor}
\ll T^{\frac{3}{4} + \epsilon} (T/T_{0})^{\frac{9}{4}} 
\sum_{m \leqslant T^{u}} \Big\{ \frac{1}{m} 
\sum_{h \leqslant T^{u}/m} |a_{1}(hm)| h^{3/8} 
\sum_{k \leqslant T^{u}/m} |a_{2}(km)| k^{3/8} \Big\}. 
\end{equation}

\subsection{Specialising on the most standard case} 
\label{subsection-specialising}

Let us now investigate the expression in (\ref{integral-bra}) when 
$a_{1}(h) = a_{2}(k) = 1$ for all values of $h$ and $k.$ Our goal is to simplify the main term (which will turn out to be of order $T \log^{9}{T}$) as much as possible, treating anything which is $\ll T\log^{8}{T}$ as an error term.

As noticed in Section \ref{subsection-initial-step}, (\ref{integral-bra}) splits up into a main term (\ref{huvuddel-summor}) and an error term (\ref{feldel-summor}). The latter is 
\begin{equation*} \label{feluppskattning-2.2}
\ll  T^{\frac{3}{4} + \epsilon} (T/T_{0})^{\frac{9}{4}} 
\sum_{m \leqslant T^{u}} \Big\{ \frac{1}{m} \cdot (T^{u}/m)^{11/8} 
\cdot (T^{u}/m)^{11/8} \Big\} 
\ll T^{\frac{3}{4} + \epsilon} (T)^{\frac{9 \epsilon}{4}} 
T^{\frac{11 u}{4}} \ll T, 
\end{equation*}
recalling Remark \ref{T-noll-val-remark} for the second step and the last step being true upon choosing $\epsilon$ to be sufficiently small. 
The main term (\ref{huvuddel-summor}) is 
\begin{align} \label{main-term-ett-borjan}
&\sum_{d \leqslant T^{u}} \frac{\mu(d)}{d} 
\sum_{m \leqslant T^{u}/d} \frac{1}{m}
\sum_{h,k \leqslant T^{u}/md} 
\frac{J(hd,kd)}{\sqrt{hk}} \nonumber \\
&= \sum_{d \leqslant T^{u}} \frac{\mu(d)}{d^{2}} 
\sum_{m \leqslant T^{u}/d} \frac{1}{m}
\int_{-\infty}^{\infty} w(t) 
\sum_{h,k \leqslant T^{u}/md} 
\frac{1}{hk} \Big\{ ... \Big\} \; dt,
\end{align}
where the expression $\{ ... \}$ in (\ref{main-term-ett-borjan}) stands for 
\begin{align} \label{Hy-integrand-uttrycket}
&Z_{\alpha,\beta,\gamma,\delta,hd,kd}(0) 
+ \Big( \frac{t}{2 \pi} \Big)^{-\alpha - \beta - \gamma - \delta} Z_{-\gamma,-\delta,-\alpha,-\beta,hd,kd}(0) \nonumber \\ 
&\qquad + \Big( \frac{t}{2 \pi} \Big)^{-\alpha - \gamma} Z_{-\gamma,\beta,-\alpha,\delta,hd,kd}(0) 
+ \Big( \frac{t}{2 \pi} \Big)^{-\alpha - \delta} Z_{-\delta,\beta,\gamma,-\alpha,hd,kd}(0) \nonumber \\
&\qquad \qquad + \Big( \frac{t}{2 \pi} \Big)^{-\beta - \gamma} Z_{\alpha,-\gamma,-\beta,\delta,hd,kd}(0) + 
\Big( \frac{t}{2 \pi} \Big)^{-\beta - \delta} Z_{\alpha,-\delta,\gamma,-\beta,hd,kd}(0).  
\end{align}

\subsection{Studying $\mathcal{Q}_{A,B}(T_{1},T_{2},f_{1},f_{2})$} 
\label{subsection-studying-Q}

As we shall see in the Sections \ref{subsection-initial-simplification} and \ref{subsection-further-simplification}, it is possible to simplify our main term (\ref{main-term-ett-borjan}) considerably. However, we first need to introduce and become familiar with a bit of new notation. 

Suppose that $f_{1}(x_{1},x_{2},T_2)$ and $f_{2}(x_{1},x_{2},T_2)$ are functions which are analytic and symmetric in their complex variables 
$x_{1}$ and $x_{2}.$ Also, let $A$ and $B$ be sets of complex numbers with 
$|A| = |B| = 2$ and write them as 
\begin{equation} \label{A-set-intro}
A := \{ \alpha_{1}, \alpha_{2} \} 
\end{equation} 
and
\begin{equation} \label{B-set-intro}
B := \{ \alpha_{3}, \alpha_{4} \}. 
\end{equation} 
We then define 
\begin{equation} \label{q-curly-stor}
\mathcal{Q}_{A,B}(T_{1},T_{2},f_{1},f_{2}) 
:= \displaystyle\sum_{\substack{R \subseteq A \\ S \subseteq B \\ |R| = |S|}} 
\mathcal{Q}((A \setminus R) \cup (-S), (B \setminus S) \cup(-R), 
T_{1}, T_{2}, f_{1}, f_{2}), 
\end{equation}
where we by $-U$ mean $\{ -u : u \in U \}$ and 
\begin{equation} \label{Q-fyra-def}
\mathcal{Q}(X,Y,T_{1},T_{2},f_{1},f_{2}) 
:= T_{1}^{(\delta_{X} + \delta_{Y})/2} 
F_{1}(X,T_{2}) F_{2}(Y,T_{2}) 
\displaystyle\prod_{\substack{x \in X \\ y \in Y}} 
\frac{1}{(x + y)}, 
\end{equation}
with 
\begin{equation} \label{delta-sets-def}
\delta_{X} := \sum_{x \in X} x
\end{equation}
and where for $X = \{ x_{1}, x_{2} \}$ we let 
\begin{equation} \label{Ff-defs}
F_{i}(X,T_{2}) := f_{i}(x_{1},x_{2},T_{2}), \; i = 1,2.
\end{equation}

Next let $\Xi$ denote the set of $\binom{4}{2}$ permutations $\sigma \in S_{4}$ satisfying $\sigma(1) < \sigma(2)$ and $\sigma(3) < \sigma(4).$ Let us for 
$\sigma \in \Xi$ define 
\begin{align} \label{k-def-lemmat}
K(\alpha_{\sigma(1)},\alpha_{\sigma(2)};\alpha_{\sigma(3)},\alpha_{\sigma(4)}) 
&= K(\alpha_{\sigma(1)}, \alpha_{\sigma(2)}; \alpha_{\sigma(3)}, \alpha_{\sigma(4)}, T_{1}, T_{2}, f_{1}, f_{2}) \nonumber \\
&:= \mathcal{Q}(\{ \alpha_{\sigma(1)}, \alpha_{\sigma(2)} \}, 
\{ -\alpha_{\sigma(3)}, -\alpha_{\sigma(4)} \}, T_{1}, T_{2}, f_{1}, f_{2}). \qquad \qquad \quad \nonumber
\end{align}
Then one has that 
\begin{equation} \label{k-Q-samband}
\sum_{\sigma \in \Xi} 
K(\alpha_{\sigma(1)},\alpha_{\sigma(2)};\alpha_{\sigma(3)},\alpha_{\sigma(4)}) 
= \mathcal{Q}_{A,-B}(T_{1},T_{2},f_{1},f_{2}).
\end{equation}

We now show that the LHS of (\ref{k-Q-samband}) is an analytic function of the $\alpha$-shifts. The problem is where any of the relevant $K(\alpha_{\sigma(1)},\alpha_{\sigma(2)};\alpha_{\sigma(3)},\alpha_{\sigma(4)})$ has a singularity. However, we will now show that these singularities must be removable. Suppose that 
\begin{equation} \label{cfkrs-villkorrrr}
\alpha_i \neq \alpha_j, \textrm{for } i \neq j.
\end{equation} 
From Lemma 2.5.1 in \cite{CFKRS} we then have the following formula: 
\begin{align} \label{cfkrs-reprrrr}
&\sum_{\sigma \in \Xi} K(\alpha_{\sigma(1)},\alpha_{\sigma(2)}; \alpha_{\sigma(3)},\alpha_{\sigma(4)}) \nonumber \\
&= \frac{1}{(2!)^2} \frac{1}{(2\pi i)^4} \oint \cdots \oint 
\frac{K(z_1,z_2;z_3,z_4)\Delta(z_1,...,z_4)^2}
{\prod_{i=1}^{4} \prod_{j=1}^{4} (z_i - \alpha_j)} \, dz_1 ... dz_4,
\end{align} 
where
\begin{equation*}
\Delta(z_1,...,z_4) := \displaystyle\prod_{1 \leqslant i < j \leqslant 4} (z_j - z_i),
\end{equation*}
and where one integrates about circles enclosing the $\alpha_j$'s.
By choosing the radii of the circles to be suitably large\footnote{If analyticity is to be shown for say $|\alpha_{i}| \leqslant C,$ then we can pick the radii of the circles to be $3C$ since then 
$|z_{i} - \alpha_{j}|^{-1} \leqslant 1/C.$}, 
we obtain an upper bound for the RHS of (\ref{cfkrs-reprrrr}). 
The function $\sum_{\sigma \in \Xi} K(\alpha_{\sigma(1)},\alpha_{\sigma(2)}; \alpha_{\sigma(3)},\alpha_{\sigma(4)})$ thus remains bounded whenever (\ref{cfkrs-villkorrrr}) is satisfied. This allows us to conclude\footnote{By Riemann's Extension Theorem --- see for example \cite{GR-complex-an}.} that the possible singularities must be removable. Hence $\mathcal{Q}_{A,-B}(T_{1},T_{2},f_{1},f_{2})$ is analytic by (\ref{k-Q-samband}), which in turn implies that 
$\mathcal{Q}_{A,B}(T_{1},T_{2},f_{1},f_{2})$ is an analytic function of the shifts. 

\subsection{Initial simplification of Theorem \ref{sats1} in the most standard case} 
\label{subsection-initial-simplification}

\begin{theorem} \label{simplified-enkla-fallet}
Suppose that $w(t)$ is a smooth, non-negative function with support contained in 
$[\frac{T}{2}, 4T]$ satisfying $w^{(j)}(t) \ll_{j} (T^{1-\epsilon})^{-j}$ for all $j = 0, 1, 2,...,$ that $\alpha, \beta, \gamma, \delta$ are complex numbers 
$\ll (\log{T})^{-1}$ and let 
\begin{equation*}
M(s) = \sum_{h \leqslant T^{u}} \frac{1}{h^{s}},
\end{equation*}
with $0 < u < 1/11.$ Then 
\begin{align} \label{simplified-enkla-fallet-ekv}
&\int_{-\infty}^{\infty} |M(\tfrac{1}{2} + it)|^{2} 
\zeta(\tfrac{1}{2} + \alpha + it) \zeta(\tfrac{1}{2} + \beta + it) 
\zeta(\tfrac{1}{2} + \gamma - it) \zeta(\tfrac{1}{2} + \delta - it) 
w(t) \; dt \nonumber \\
&= \Big[ \int_{-\infty}^{\infty} w(t) \; dt \Big] 
\cdot a_{3} 
\cdot T^{-(\alpha + \beta + \gamma + \delta)/2} 
\cdot \sum_{m \leqslant T^{u}} \frac{1}{m} 
\mathcal{Q}_{A,B}(T,T^{u}/m,f,f) + O(T \log^{8}{T}), 
\end{align}
with 
\begin{equation*} 
a_{3} = \displaystyle\prod_{p} 
\Big\{ \Big(1 + \frac{4}{p} + \frac{1}{p^{2}}\Big)
\Big(1 - \frac{1}{p}\Big)^{4} \Big\}
\end{equation*}
and where we are here taking (see also definition given in (\ref{Q-fyra-def}))
\begin{equation} \label{A-val-set}
A := \{\alpha, \beta\}, 
\end{equation}
\begin{equation} \label{B-val-set}
B := \{\gamma, \delta\}
\end{equation}
and\footnote{When comparing (\ref{A-val-set}) with 
(\ref{A-set-intro}), the reader will probably find it easiest to just identify $\alpha$ and $\beta$ with $\alpha_{1}$ and $\alpha_{2}$ respectively. Note however that due to symmetry, it does not matter if the roles are reversed.} 
\begin{equation} \label{f-function-spec-ett}
f(x_{1},x_{2},T_{2}) 
:= \frac{1}{x_{1} x_{2}} - \frac{T_{2}^{-x_{1}}}{x_{1}(x_{2} - x_{1})} 
- \frac{T_{2}^{-x_{2}}}{x_{2}(x_{1} - x_{2})}. 
\end{equation}
\end{theorem}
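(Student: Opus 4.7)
My strategy is to specialise Theorem \ref{sats1} to $a_{1}(h)=a_{2}(k)=1$ and run the expansion laid out in Section \ref{subsection-initial-step}. First I would write $|M(\tfrac{1}{2}+it)|^{2} = M(\tfrac{1}{2}+it)\,M(\tfrac{1}{2}-it)$ and apply (\ref{ms-expanded-ett})--(\ref{ms-expanded-tva}) to split the LHS of (\ref{simplified-enkla-fallet-ekv}) into the main piece (\ref{huvuddel-summor}) and the remainder (\ref{feldel-summor}). The remainder was already bounded by $\ll T$ in Section \ref{subsection-specialising} under the hypothesis $u < 1/11$, and is absorbed into the target $O(T\log^{8}T)$. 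Hence the task reduces to asymptotically evaluating the main term (\ref{main-term-ett-borjan}).

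Next, I would interchange the finite double sum over $h,k$ with the $t$-integral in (\ref{main-term-ett-borjan}) and, for each of the six ``swap'' terms of the integrand (\ref{Hy-integrand-uttrycket}), evaluate an expression of the form
\[
S(T^{u}/md;\alpha',\beta',\gamma',\delta') := \sum_{\substack{h,k \leqslant T^{u}/md \\ (h,k)=1}} \frac{Z_{\alpha',\beta',\gamma',\delta',hd,kd}(0)}{hk},
\]
where $(\alpha',\beta',\gamma',\delta')$ is the signed permutation of $(\alpha,\beta,\gamma,\delta)$ dictated by that term. Since $Z = A\cdot B$ and $A_{\alpha',\beta',\gamma',\delta'}(0)$ does not depend on $h,k$, it pulls outside. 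The remaining factor $B_{\alpha',\beta',\gamma',\delta',hd,kd}(0)$ is multiplicative separately in $h$ and $k$, so a two-variable Perron/contour argument evaluates the truncated double sum, and combining with the external factor $d^{-2}\mu(d)$ and summing over $d \leqslant T^{u}$ collapses the arithmetic part into the Euler product $a_{3} = \prod_{p}(1+4/p+1/p^{2})(1-1/p)^{4}$.

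The remaining analytic ingredients match the $\mathcal{Q}_{A,B}$-structure as follows: the four $\zeta(1+\cdot)$ factors in the numerator of $A_{\alpha',\beta',\gamma',\delta'}(0)$ contribute, at leading order, the product $\prod_{x \in X,\, y \in Y}(x+y)^{-1}$; the truncated Perron integrals over $h,k$ at scale $T^{u}/m$ produce the symmetric function $f(x_{1},x_{2},T^{u}/m)$ of (\ref{f-function-spec-ett}); and $\int (t/2\pi)^{-s} w(t)\,dt = (T/2\pi)^{-s}\int w(t)\,dt\cdot(1 + O(T^{-\epsilon}))$ supplies the power of $T$. The six Hughes--Young swaps correspond bijectively to the six pairs $(R,S)$ with $R\subseteq A$, $S\subseteq B$, $|R|=|S|\in\{0,1,2\}$ in the definition (\ref{q-curly-stor}) of $\mathcal{Q}_{A,B}$: the exponent of $(t/2\pi)$ in each swap equals $-(\delta_{R}+\delta_{S})$, which after factoring out the global $T^{-(\alpha+\beta+\gamma+\delta)/2}$ yields exactly $T^{(\delta_{X}+\delta_{Y})/2}$ thanks to the identity $\delta_{X}+\delta_{Y} = (\alpha+\beta+\gamma+\delta) - 2(\delta_{R}+\delta_{S})$.

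The main obstacle is that individual Hughes--Young swaps and individual summands of $\mathcal{Q}_{A,B}$ carry apparent poles in the shifts (from the $\zeta(1+\cdot)$ factors in $A$ at coincidences such as $\alpha = -\gamma$, and from the $1/(x_{1}-x_{2})$ pieces of $f$), and these poles must cancel across the sum. I would therefore carry out the calculation at generic, distinct shifts with $|\alpha_{i}| \ll (\log T)^{-1}$ so that no singularities are encountered, and then extend the identity to all such shifts by the analyticity argument of Section \ref{subsection-studying-Q}, which via Lemma~2.5.1 of \cite{CFKRS} and Riemann's Extension Theorem shows that $\mathcal{Q}_{A,B}$ is entire in the shifts; the LHS of (\ref{simplified-enkla-fallet-ekv}) is manifestly entire in the shifts as well, so the equality of two entire functions on a dense set extends everywhere.
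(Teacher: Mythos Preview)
Your strategy matches the paper's: specialise Theorem~\ref{sats1} with $a_{1}=a_{2}=1$, treat each of the six swap terms via a Perron/contour evaluation of the $h$- and $k$-sums, identify the pieces with the six summands of $\mathcal{Q}_{A,B}$, and pass from generic to arbitrary shifts by the analyticity machinery of Section~\ref{subsection-studying-Q}. Two technical points in your sketch need correcting, however.

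First, the approximation $\int (t/2\pi)^{-s}w(t)\,dt = (T/2\pi)^{-s}\int w(t)\,dt\cdot(1+O(T^{-\epsilon}))$ is too optimistic: since $t/T\asymp 1$ on the support of $w$ and $|s|\ll(\log T)^{-1}$, one only gets $1+O(1/\log T)$ (see the paper's (\ref{w-int-ungefar})). Fortunately this is exactly enough for the target error $O(T\log^{8}T)$. Second, and more substantively, you do not explain why the ``arithmetic part'' collapses to the \emph{same} constant $a_{3}$ for all six swaps. After writing the $h$-Dirichlet series as $\zeta(1+s+\gamma')\zeta(1+s+\delta')\,G(1+s;\alpha',\beta',\gamma',\delta';d)$ with $G\ll d^{\epsilon}$ and $\partial G/\partial x\ll d^{\epsilon}$, the paper replaces $G$ at each of the three residues by its value $G(1;0,0,0,0;d)$ at zero shifts, incurring only $O(d^{\epsilon}/\log T)$; only then does the $d$-sum become $\zeta(2)^{-1}\sum_{d\ge 1}\mu(d)d^{-2}G(1;0,0,0,0;d)^{2}=a_{3}$, independently of the swap. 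The same step is used to replace $(T^{u}/md)^{x}$ by $(T^{u}/m)^{x}$ and to extend the $m$-range from $m\le T^{u}/d$ to $m\le T^{u}$. Without this ``freeze the shifts in $G$'' step, the arithmetic factor still depends on $(\alpha',\beta',\gamma',\delta')$ and your claim that it simply collapses into $a_{3}$ is not justified.
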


\begin{remark} \label{anal-of-f-standard}
It is easy to show that $f(x_{1},x_{2},T_{2})$ is analytic in 
$x_{1}$ and $x_{2}.$ Assume (without loss of generality) that 
$|x_{1}|, |x_{2}| \leqslant C$ say. Consider 
\begin{equation} \label{analytic-integral-enkelt-ju} 
\frac{1}{2 \pi i} \int_{R} \frac{T_{2}^{s}}{s(s+x_{1})(s+x_{2})} \; ds, 
\end{equation} 
with $R$ denoting a counter-clockwise integral-contour around a square with vertices at 
$\pm 2C \pm 2Ci.$ Suppose that $x_{1}$ and $x_{2}$ are different and non-zero. Then (\ref{analytic-integral-enkelt-ju}) equals (\ref{f-function-spec-ett}) by Cauchy's Residue Theorem, and the integral in (\ref{analytic-integral-enkelt-ju}) is obviously bounded. It follows that any possible singularities of $f(x_{1},x_{2},T_{2})$ must be removable. 
\end{remark}

\begin{remark} \label{analyticity-HL-remark}
By Section \ref{subsection-studying-Q} we thus know that $\mathcal{Q}_{A,B}(T_{1},T_{2},f,f)$ is analytic in terms of the shifts. Hence the main term in the RHS of (\ref{simplified-enkla-fallet-ekv}) is analytic in the shifts.
\end{remark}

\begin{proof}
We will first proceed under the assumption that for some 
fixed constant $C > 0$ we have that 
\begin{equation} \label{extra-condition}
 |\alpha_i| \geqslant C/\log{T}, \; \; \; |\alpha_i + \alpha_j| \geqslant C/\log{T} \; \; \; \; \textrm{and} \; \; \; |\alpha_i - \alpha_j| \geqslant C/\log{T}, 
\end{equation}
where $\alpha_{i}$ and $\alpha_{j}$ (with $i$ and $j$ distinct) stand for any of the shifts.

Let us begin by noticing that there is an obvious identification to be made between the terms in (\ref{main-term-ett-borjan}) and in the main term of (\ref{simplified-enkla-fallet-ekv}) (indeed both expressions involve the same number of terms, namely six)\footnote{The reader may find it helpful to factor out $( \frac{t}{2 \pi} )^{-(\alpha + \beta + \gamma + \delta)/2}$ in (\ref{Hy-integrand-uttrycket}).}. We shall prove Theorem \ref{simplified-enkla-fallet} under the assumption (\ref{extra-condition}) by treating each term in (\ref{main-term-ett-borjan}) individually. Below we will focus on the third term in (\ref{main-term-ett-borjan}), which will correspond (recalling (\ref{q-curly-stor})) to the case $R = \{ \alpha \}$ and $S = \{ \gamma \}$ in (\ref{simplified-enkla-fallet-ekv}). The other terms can be treated analogously.

Our starting point will thus be given by
\begin{equation} \label{tredje-termen-i-bevis}
\Big\{ \int_{-\infty}^{\infty} w(t) 
\Big( \frac{t}{2 \pi} \Big)^{-\alpha - \gamma} \; dt \Big\}
\sum_{d \leqslant T^{u}} \frac{\mu(d)}{d^{2}} 
\sum_{m \leqslant T^{u}/d} \frac{1}{m}
\sum_{h,k \leqslant T^{u}/md} 
\frac{Z_{-\gamma,\beta,-\alpha,\delta,hd,kd}(0)}{hk}. 
\end{equation}
Let us write (recall (\ref{Z-sixth-def})) 
\begin{equation} \label{z-redef}
Z_{\alpha,\beta,\gamma,\delta,h,k}(0) 
= A_{\alpha,\beta,\gamma,\delta}(0) 
B_{\alpha,\beta,\gamma,\delta}(h)
E_{\alpha,\beta,\gamma,\delta}(k),
\end{equation}
with 
\begin{equation} \label{B-ny-euler-def}
B_{\alpha,\beta,\gamma,\delta}(h) := 
\displaystyle\prod_{p|h} 
\bigg( \frac{ \sum_{j=0}^{\infty} \sigma_{\alpha,\beta}(p^{j}) 
\sigma_{\gamma,\delta}(p^{j + h_{p}}) p^{-j}}
{\sum_{j=0}^{\infty} \sigma_{\alpha,\beta}(p^{j}) 
\sigma_{\gamma,\delta}(p^{j}) p^{-j}} \bigg)
\end{equation}  \\
and 
\begin{equation} \label{E-ny-euler-def}
E_{\alpha,\beta,\gamma,\delta}(k) 
:= \displaystyle\prod_{p|k} 
\bigg( \frac{ \sum_{j=0}^{\infty} \sigma_{\alpha,\beta}(p^{j + k_{p}}) 
\sigma_{\gamma,\delta}(p^{j}) p^{-j} }
{\sum_{j=0}^{\infty} \sigma_{\alpha,\beta}(p^{j}) 
\sigma_{\gamma,\delta}(p^{j}) p^{-j}} \bigg).
\end{equation}
Using this notation, (\ref{tredje-termen-i-bevis}) becomes 
\begin{align} \label{tredje-termen-i-bevis-II}
&\Big\{ \int_{-\infty}^{\infty} w(t) 
\Big( \frac{t}{2 \pi} \Big)^{-\alpha - \gamma} \; dt \Big\}
A_{-\gamma,\beta,-\alpha,\delta}(0) \nonumber \\
&\times \sum_{d \leqslant T^{u}} \frac{\mu(d)}{d^{2}} 
\sum_{m \leqslant T^{u}/d} 
\bigg\{ \frac{1}{m}
\sum_{h \leqslant T^{u}/md} 
\frac{B_{-\gamma,\beta,-\alpha,\delta}(hd)}{h}
\sum_{k \leqslant T^{u}/md} 
\frac{E_{-\gamma,\beta,-\alpha,\delta}(kd)}{k} \bigg\}.
\end{align}

It would be desirable to have more information about the two innermost sums in (\ref{tredje-termen-i-bevis-II}). They are similar. Let us study 
$\sum_{h \leqslant T^{u}/md} 
\frac{B_{\alpha,\beta,\gamma,\delta}(hd)}{h}$.
First note that 
\begin{equation} \label{B-euler-upper-bound}
B_{\alpha,\beta,\gamma,\delta}(w) 
= \displaystyle\prod_{p|w} B_{\alpha,\beta,\gamma,\delta}(p^{w_{p}}) 
\ll \displaystyle\prod_{p|w} w_{p} p^{\epsilon w_{p}} 
\ll \displaystyle\prod_{p|w} p^{\epsilon w_{p}} \cdot p^{\epsilon w_{p}} 
= w^{2 \epsilon}.
\end{equation}
Now define for Re$(s) > 1,$ 
\begin{equation} \label{F-fn-deff}
F(s,\alpha,\beta,\gamma,\delta,d) := \sum_{h = 1}^{\infty} 
\frac{B_{\alpha,\beta,\gamma,\delta}(hd)}{h^{s}}.
\end{equation}
Suppose for the time being that $\frac{T^{u}}{md}$ is a half-integer, say $\frac{T^{u}}{md} = M + \frac{1}{2}$ for some positive integer $M.$ We claim that then 
\begin{equation} \label{euler-evaluatiooon-II}
\sum_{h \leqslant T^{u}/md} 
\frac{B_{\alpha,\beta,\gamma,\delta}(hd)}{h} 
= \frac{1}{2 \pi i} \int_{\epsilon - iW}^{\epsilon + iW} 
\frac{F(1+s,\alpha,\beta,\gamma,\delta,d) (T^{u}/md)^{s}}{s} \; ds 
+ O(d^{\epsilon}),
\end{equation}
with (somewhat arbitrary choice) 
\begin{equation} \label{W-choice-int}
W = 10 \times (T^{u}/md)^{1.1}.
\end{equation}
It is easy to show (this is one version of Perron's formula) that 
\begin{equation} \label{perron}
\frac{1}{2 \pi i} \int_{\epsilon - iW}^{\epsilon + iW} \frac{x^{s}}{s} \; ds 
= H(x) + O\Big( \frac{x^{\epsilon}}{W|\log{x}|} \Big),
\end{equation}
for $x > 0,$ $x \neq 1,$ where
\begin{equation} \label{H-perron}
H(x)= \left\{ \begin{array}{l l}
  1 & \quad \mbox{if $x > 1$,}\\
  0 & \quad \mbox{if $x < 1.$}\\
\end{array} \right. 
\end{equation}
Expanding out $F(1+s,\alpha,\beta,\gamma,\delta,d)$ in (\ref{euler-evaluatiooon-II}), 
we thus get 
\begin{align} \label{perron-mellan}
&\frac{1}{2 \pi i} \int_{\epsilon - iW}^{\epsilon + iW} 
\frac{F(1+s,\alpha,\beta,\gamma,\delta,d) (T^{u}/md)^{s}}{s} \; ds 
\nonumber \\
&= \sum_{h \leqslant T^{u}/md} 
\frac{B_{\alpha,\beta,\gamma,\delta}(hd)}{h} 
+ O\Big( \sum_{h = 1}^{\infty} \frac{|B_{\alpha,\beta,\gamma,\delta}(hd)|}{h} 
\cdot \frac{(T^{u}/mdh)^{\epsilon}}{W |\log(T^{u}/mdh)|} \Big).
\end{align}
If $h \notin [ \frac{T^{u}}{2md}, \frac{3T^{u}}{2md} ],$  then $|\log(T^{u}/mdh)|^{-1} \ll 1.$ The part of the error in (\ref{perron-mellan}) corresponding to such values of $h$ is thus 
\begin{equation*} \label{perron-mellan-fel-ett}
\ll \sum_{h = 1}^{\infty} \frac{(hd)^{\epsilon / 2}}{h} 
\cdot \frac{(T^{u}/mdh)^{\epsilon}}{W} 
\ll d^{\epsilon}.
\end{equation*}
For $M + \frac{1}{2} = \frac{T^{u}}{md} < h \leqslant \frac{3T^{u}}{2md},$ we write 
\[ h = M + R, \qquad R = 1,...,\Big[ \frac{3T^{u}}{2md} \Big] - M\] 
and spot that here 
\begin{equation*} \label{perron-logfel}
|\log(T^{u}/mdh)|^{-1} \ll \frac{M}{R}, 
\end{equation*} 
so that the part of the error in (\ref{perron-mellan}) corresponding to these values of $h$ is certainly 
\begin{equation*} \label{perron-fel-tva}
\ll \frac{d^{\epsilon}}{W} \sum_{R = 1}^{M} \frac{M}{R} 
\ll \frac{d^{\epsilon}}{W} \cdot M \log{(M + 1)} \ll d^{\epsilon}.
\end{equation*}
A very similar argument applies when 
$\frac{T^{u}}{2md} \leqslant h \leqslant \frac{T^{u}}{md},$ which concludes the proof of the claim in the case when $\frac{T^{u}}{md}$ is a half-integer.
If this is not the case, then certainly for some $0 < \mu < 1$ we have that 
$\frac{T^{u}}{md} + \mu$ is a half-integer. We obtain\footnote{By looking at the derivation of (\ref{euler-evaluatiooon-II}), it is obvious that introducing $\mu$ does not necessitate a change in our choice of $W.$} 
\begin{align} \label{euler-evaluatiooon-III}
&\sum_{h \leqslant T^{u}/md} 
\frac{B_{\alpha,\beta,\gamma,\delta}(hd)}{h} 
= \sum_{h \leqslant T^{u}/md + \mu} 
\frac{B_{\alpha,\beta,\gamma,\delta}(hd)}{h}
+ O(d^{\epsilon}) \nonumber \\
&= \frac{1}{2 \pi i} \int_{\epsilon - iW}^{\epsilon + iW} 
\frac{F(1+s,\alpha,\beta,\gamma,\delta,d) (T^{u}/md + \mu)^{s}}{s} \; ds 
+ O(d^{\epsilon}).
\end{align}

We are thus lead to investigating the RHS of (\ref{euler-evaluatiooon-III}). By contemplating the definition of 
$B_{\alpha,\beta,\gamma,\delta}(hd),$ one concludes (see (\ref{B-ett-del})-(\ref{B-p-storre-del})) that one may write 
\begin{equation} \label{G-fn-deff}
F(s,\alpha,\beta,\gamma,\delta,d) =: 
\zeta(s + \gamma) \zeta(s + \delta) G(s,\alpha,\beta,\gamma,\delta,d),
\end{equation}
with $G(s,\alpha,\beta,\gamma,\delta,d)$ being an analytic function for 
Re$(s) > 1/2.$ Let us in the following discussion restrict ourselves to when 
$0.9 \leqslant$ Re$(s) \leqslant 1.1.$ We have
\begin{align*} 
G(s,\alpha,\beta,\gamma,\delta,d) &= 
\prod_{p | d} \big\{ (1-p^{-s-\gamma})(1-p^{-s-\delta}) \big\}
\displaystyle\sum_{\substack{h = 1 \\ p|h \Rightarrow p|d}}^{\infty} 
\frac{B_{\alpha,\beta,\gamma,\delta}(hd)}{h^s} \\
&\times \prod_{p \nmid d} \bigg\{ (1-p^{-s-\gamma})(1-p^{-s-\delta}) 
\displaystyle\sum_{M = 0}^{\infty} 
\frac{B_{\alpha,\beta,\gamma,\delta}(p^{M})}{p^{Ms}} \bigg\}.
\end{align*}
Let us write this as 
\begin{equation} \label{g-split} 
G(s,\alpha,\beta,\gamma,\delta,d) = 
G_1(s,\alpha,\beta,\gamma,\delta,d) \times G_2(s,\alpha,\beta,\gamma,\delta,d).
\end{equation}

We notice that (uniformly) 
\begin{equation} \label{B-ett-del}
B_{\alpha,\beta,\gamma,\delta}(1) = 1, 
\end{equation} 
\begin{equation} \label{B-p-del}
B_{\alpha,\beta,\gamma,\delta}(p) 
= p^{-\gamma} + p^{-\delta} + O(p^{-0.9 + 2 \epsilon}) 
\end{equation} 
and
\begin{equation} \label{B-p-storre-del}
B_{\alpha,\beta,\gamma,\delta}(p^N) = O((N + 1) p^{N \epsilon}), N \geqslant 2. 
\end{equation} 
Thus 
\[ \displaystyle\sum_{M = 0}^{\infty} 
\frac{B_{\alpha,\beta,\gamma,\delta}(p^M)}{p^{Ms}} 
= 1 + p^{-s-\gamma} + p^{-s-\delta} + O(p^{-1.8 + 2 \epsilon}) \]
and hence
\begin{equation*} \label{G-tva-result}
G_2(s,\alpha,\beta,\gamma,\delta,d) 
= \prod_{p \nmid d}(1 + O(p^{-1.8 + 2 \epsilon})).
\end{equation*}
In particular we obtain 
\begin{equation*} \label{G-tva-bound-tjaha}
G_2(s,\alpha,\beta,\gamma,\delta,d) \ll 1.
\end{equation*}
The main thing to notice in the above argument is that the terms of order (roughly) $p^{-\mathrm{Re}(s)}$ exactly cancel in the factors of 
$G_2(s,\alpha,\beta,\gamma,\delta,d)$. Keeping this in mind, one can show 
\begin{equation*} \label{g-tva-logs}
\frac{\partial G_2(s,\alpha,\beta,\gamma,\delta,d)}{\partial x} 
\ll \sum_{p \nmid d} \log p \cdot p^{-1.8 + 2\epsilon} \cdot 1 \ll 1,
\end{equation*}
where $x$ here shall mean any of $\alpha, \beta, \gamma, \delta, s.$

Now we study $G_1(s,\alpha,\beta,\gamma,\delta,d).$ First of all, 
\begin{equation*} 
G_{11}(s,\alpha,\beta,\gamma,\delta,d) 
:= \prod_{p | d} \{ (1-p^{-s-\gamma})(1-p^{-s-\delta}) \} 
\ll \prod_{p | d} \{ 2 \cdot 2 \} \ll  d^\epsilon. 
\end{equation*}
Also, logarithmic differentiation yields 
\begin{equation*} \label{g-elva-logs}
\frac{\partial G_{11}(s,\alpha,\beta,\gamma,\delta,d)}{\partial x} 
\ll d^\epsilon.
\end{equation*}
Next, we study 
\[ G_{12}(s,\alpha,\beta,\gamma,\delta,d) := 
\displaystyle\sum_{\substack{h = 1 \\ p|h \Rightarrow p|d}}^{\infty} 
\frac{B_{\alpha,\beta,\gamma,\delta}(hd)}{h^s}. \]
Recalling (\ref{B-euler-upper-bound}) we find 
\begin{align*} 
G_{12}(s,\alpha,\beta,\gamma,\delta,d) 
\ll d^\epsilon 
\displaystyle\sum_{\substack{h = 1 \\ p|h \Rightarrow p|d}}^{\infty} 
h^{-0.9 + \epsilon} 
&\ll d^\epsilon \displaystyle\prod_{p|d} \sum_{j=0}^{\infty}  
p^{j(-0.9 + \epsilon)} \\
&\ll d^\epsilon \displaystyle\prod_{p|d} (1 + O(p^{-0.9 + \epsilon})) 
\ll d^\epsilon. 
\end{align*}
Also, although the details are somewhat more delicate, one can in a straight-forward direct way do a similar upper bound calculation as above in order to deduce that 
\[ \frac{\partial {G_{12}(s,\alpha,\beta,\gamma,\delta,d)}}{\partial x} 
\ll d^\epsilon. \]

Putting things together we have 
\begin{equation} \label{g-bound}
G(s,\alpha,\beta,\gamma,\delta,d) \ll d^\epsilon
\end{equation}
and 
\begin{equation} \label{g-diff-bound}
\frac{\partial {G(s,\alpha,\beta,\gamma,\delta,d)}}{\partial x} 
\ll d^\epsilon.
\end{equation}

Now it is time to go back to (\ref{euler-evaluatiooon-III}) which tells us that 
\begin{align} \label{euler-evaluatiooon-IV}
&\sum_{h \leqslant T^{u}/md} 
\frac{B_{-\gamma,\beta,-\alpha,\delta}(hd)}{h} \nonumber \\
&= \frac{1}{2 \pi i} \int_{\epsilon - iW}^{\epsilon + iW} 
\frac{\zeta(1 - \alpha + s) \zeta(1 + \delta + s) 
(T^{u}/md + \mu)^{s} G(1+s,-\gamma,\beta,-\alpha,\delta,d)}{s} \; ds 
+ O(d^{\epsilon}).
\end{align}
Using a rectangular path, we move the line of integration from 
Re$(s) = \epsilon$ to Re$(s) = -0.05.$ The contribution along any of the two horizontal line-segments is  
\begin{equation*} \label{hor-int-bound}
\ll \int_{-0.05}^{\epsilon} \frac{(W^{0.1 \times \frac{1}{6} + \epsilon})^{2} 
(T^{u}/md + \mu)^{\epsilon} d^{\epsilon}}{W} \; dx 
\ll d^{\epsilon}.
\end{equation*}
And the contribution along the new vertical line-segment is 
\begin{align*} \label{horiz-ny-int-bound}
\int_{-0.05 - iW}^{-0.05 + iW} &= \int_{-0.05 - iW}^{-0.05 - 2 i} 
+ \int_{-0.05 - 2 i}^{-0.05 + 2 i} + \int_{-0.05 + 2i}^{-0.05 + iW} \nonumber \\ 
&\ll 2 \int_{2}^{W} \frac{(t^{0.1 \times \frac{1}{6} + \epsilon})^{2} 
(T^{u}/md + \mu)^{-0.05} d^{\epsilon}}{t} \; dt 
+ \int_{-2}^{2} \frac{(T^{u}/md + \mu)^{-0.05} d^{\epsilon}}{0.05} \; dt \nonumber \\ 
&\ll (T^{u}/md)^{-0.05} d^{\epsilon} 
\Big\{ \int_{2}^{W} t^{-\frac{29}{30} + \epsilon}  \; dt + 1 \Big\} \nonumber \\ 
& \ll d^{\epsilon}.
\end{align*}
By Cauchy's Residue Theorem 
\begin{equation} \label{cauchy-res-perron-slut}
\sum_{h \leqslant T^{u}/md} 
\frac{B_{-\gamma,\beta,-\alpha,\delta}(hd)}{h} 
= \mathrm{Res}_{s = 0} + \mathrm{Res}_{s = \alpha} 
+ \mathrm{Res}_{s = -\delta} + O(d^{\epsilon}),
\end{equation}
where of course we are referring to the residues of the integrand in (\ref{euler-evaluatiooon-IV}).

Finally we have (almost) got all the puzzle-pieces needed to conclude the proof under the assumption (\ref{extra-condition}). We consider the various factors in (\ref{tredje-termen-i-bevis-II}). First of all 
\begin{align} \label{w-int-ungefar}
\int_{-\infty}^{\infty} w(t) 
\Big( \frac{t}{2 \pi} \Big)^{-\alpha - \gamma} \; dt
&= T^{-\alpha - \gamma} \int_{T/2}^{4T} w(t) 
\exp \Big\{ (-\alpha - \gamma) \log \Big(\frac{t}{2 \pi T} \Big) \Big\} \; dt \nonumber \\ 
& = T^{-\alpha - \gamma} \int_{T/2}^{4T} w(t) 
\Big\{1 + O\Big(\frac{1}{\log{T}}\Big)\Big\} \; dt \nonumber \\
& = T^{-\alpha - \gamma} \int_{-\infty}^{\infty} w(t) \; dt
+ O\Big(1 \cdot \int_{T/2}^{4T} 
\frac{|w(t)|}{\log{T}} \; dt \Big) \nonumber \\
& = T^{-\alpha - \gamma} \int_{-\infty}^{\infty} w(t) \; dt
+ O\Big(\frac{T}{\log{T}} \Big) \nonumber \\
& = T^{-(\alpha + \beta + \gamma + \delta)/2} 
\cdot T^{(\beta - \gamma + \delta -\alpha)/2} 
\cdot \int_{-\infty}^{\infty} w(t) \; dt
+ O\Big(\frac{T}{\log{T}} \Big),
\end{align}
where we for future need also remark that the main term in (\ref{w-int-ungefar}) trivially is $\ll T.$

Secondly, 
\begin{align} \label{A-euler-forenkling}
&A_{-\gamma,\beta,-\alpha,\delta}(0) = \frac{\zeta(1 -\gamma - \alpha) 
\zeta(1 - \gamma + \delta) \zeta(1 + \beta - \alpha) 
\zeta(1 + \beta + \delta) }
{\zeta(2 -\gamma + \beta -\alpha + \delta)} \nonumber \\
&= \frac{1}{\zeta(2)} \cdot \frac{1}{(-\gamma - \alpha)} 
\cdot \frac{1}{(- \gamma + \delta)} \cdot \frac{1}{(\beta - \alpha)} 
\cdot \frac{1}{(\beta + \delta)} + O( \log^{3}{T} ), 
\end{align}
where we remark that the main term in (\ref{A-euler-forenkling}) obviously is of order $\log^{4}{T}.$

Thirdly, the $m$-summands in (\ref{tredje-termen-i-bevis-II}) contain two sums like in (\ref{cauchy-res-perron-slut}). Each of those sums will be evaluated by using (\ref{cauchy-res-perron-slut}) and hence give rise to residues. We will now explain how to treat the residue at $s = \alpha$ in (\ref{cauchy-res-perron-slut}). Explicitly the residue is 
\begin{equation} \label{residue-explicit}
\frac{\zeta(1 + \delta + \alpha) (T^{u}/md + \mu)^{\alpha} 
G(1 + \alpha,-\gamma,\beta,-\alpha,\delta,d)}{\alpha}.
\end{equation}
Repeatedly using the Theorem of Calculus 
\[ F(\gamma(b)) - F(\gamma(a)) = \int\limits_{\gamma} F'(z) \, dz, \]
we obtain, recalling (\ref{g-diff-bound}), that 
\begin{equation} \label{G-liten-skillnad}
G(1 + \alpha,-\gamma,\beta,-\alpha,\delta,d) 
= G(1,0,0,0,0,d) + O\Big( \frac{d^{\epsilon}}{\log{T}} \Big). 
\end{equation}
We easily deduce that (\ref{residue-explicit}) is 
\begin{equation} \label{residue-explicit-II}
\frac{(T^{u}/md)^{\alpha} 
G(1,0,0,0,0,d)}
{(\delta + \alpha) \alpha} + O(d^{\epsilon} \log{T}), 
\end{equation}
where clearly the main term in (\ref{residue-explicit-II}) is 
\[ \ll d^{\epsilon} \log^{2}{T}. \]
The other five residues are treated similarly. Then we put in our new expressions for 
$\sum_{h \leqslant T^{u}/md} \frac{B_{-\gamma,\beta,-\alpha,\delta}(hd)}{h}$ 
and $\sum_{k \leqslant T^{u}/md} \frac{E_{-\gamma,\beta,-\alpha,\delta}(kd)}{k}$ 
into (\ref{tredje-termen-i-bevis-II}). Since the outer sum over $d$ in (\ref{tredje-termen-i-bevis-II}) always will be convergent (due to the presence of $d^2$ in the denominator), an inspection yields two things. First of all that if we ever choose to take the error-part of any of the discussed expressions, we will in (\ref{tredje-termen-i-bevis-II}) end up with something that is 
$\ll T\log^{8}{T}$ and which thus can be relegated to the error term in (\ref{simplified-enkla-fallet-ekv}). And secondly that we may replace the 
$(T^{u}/md)^{\alpha}$ in (\ref{residue-explicit-II}) by $(T^{u}/m)^{\alpha}$ and change the range of summation over $m$ in (\ref{tredje-termen-i-bevis-II}) from $m \leqslant T^{u}/d$ to $m \leqslant T^{u},$ since the change introduced by these is $\ll T\log^{8}{T}.$

All-in-all we get that (\ref{tredje-termen-i-bevis-II}) equals 
\begin{align} \label{tredje-termen-i-bevis-III}
&\Big[ \int_{-\infty}^{\infty} w(t) \; dt \Big] 
\cdot \frac{1}{\zeta(2)} 
\cdot \sum_{d \leqslant T^{u}} \frac{\mu(d) G(1,0,0,0,0,d)^{2}}{d^{2}} \\
&\cdot T^{-(\alpha + \beta + \gamma + \delta)/2} 
\cdot \sum_{m \leqslant T^{u}} \frac{1}{m} 
\mathcal{Q}(\{ \beta, -\gamma \},\{ \delta, -\alpha \},T,T^{u}/m,f,f) 
+ O(T \log^{8}{T}). \nonumber
\end{align}

We may extend the finite sum over $d$ in (\ref{tredje-termen-i-bevis-III}) to an infinite one, since 
\begin{equation} \label{d-finite-infinite}
\sum_{d > T^{u}} \frac{\mu(d) G(1,0,0,0,0,d)^{2}}{d^{2}} 
\ll \sum_{d > T^{u}} \frac{1 \cdot d^{2 \epsilon}}{d^{2}} 
\ll \frac{1}{\log{T}}. 
\end{equation} 
It then remains to establish the identity 
\begin{equation} \label{d-identity}
\frac{1}{\zeta(2)} 
\sum_{d = 1}^{\infty} \frac{\mu(d) G(1,0,0,0,0,d)^{2}}{d^{2}}
= \displaystyle\prod_{p} 
\Big\{ \Big(1 + \frac{4}{p} + \frac{1}{p^{2}}\Big)
\Big(1 - \frac{1}{p}\Big)^{4} \Big\}. 
\end{equation}
By an explicit calculation 
\begin{equation} \label{G-forsta-steg}
G(1,0,0,0,0,1) =\prod_{p} \Big\{ (1-p^{-1})^2 
\displaystyle\sum_{M = 0}^{\infty} 
\frac{B_{0,0,0,0}(p^M)}{p^{M}} \Big\} 
= \prod_{p} \Big\{ \frac{(1-p^{-1})(1+2p^{-1})}{(1+p^{-1})} \Big\}. 
\end{equation} 
It is natural to define 
\begin{equation} \label{g-lilla-def}
g(1,0,0,0,0,d):= G(1,0,0,0,0,d)/G(1,0,0,0,0,1) 
\end{equation}
and next we show that $g(1,0,0,0,0,d)$ is a multiplicative function. 
Let $D_1,$ $D_2 \in \mathbb{N}$ be relatively prime. Recall (\ref{g-split}) and note that $B_{0,0,0,0}(n)$ is a multiplicative function. By expanding out the terms involved in the relation 
\[ g(1,0,0,0,0,D_1) \cdot g(1,0,0,0,0,D_2) = g(1,0,0,0,0,D_1D_2), \]
this equality is, after cancellation, seen to be equivalent to 
\[ \displaystyle\sum_{\substack{h = 1 \\ p|h \Rightarrow p|D_1}}^{\infty} 
\frac{B_{0,0,0,0}(hD_1)}{h} \cdot 
\displaystyle\sum_{\substack{k = 1 \\ p|k \Rightarrow p|D_2}}^{\infty} 
\frac{B_{0,0,0,0}(kD_2)}{k}
= \displaystyle\sum_{\substack{m = 1 \\ p|m \Rightarrow p|D_1D_2}}^{\infty} 
\frac{B_{0,0,0,0}(mD_1D_2)}{m}. \]
This identity can be seen to hold by equalling denominators (using multiplicativity of the function $B_{0,0,0,0}(n)$).

Therefore the LHS of (\ref{d-identity}) equals 
\begin{equation} \label{d-identitets-mellansteg}
\frac{G(1,0,0,0,0,1)^{2}}{\zeta(2)} 
\sum_{d = 1}^{\infty} \frac{\mu(d) g(1,0,0,0,0,d)^2}{d^2}. 
\end{equation} 
Using multiplicativity we are lead to studying  
\begin{equation} \label{d-identitets-steg-tva}
\sum_{M = 0}^{\infty} \frac{\mu(p^M) g(1,0,0,0,0,p^M)^2}{p^{2M}} 
= 1 - \frac{g(1,0,0,0,0,p)^2}{p^{2}}. 
\end{equation} 
An explicit calculation gives that 
\begin{equation} \label{d-identitets-steg-tre}
g(1,0,0,0,0,p) = \frac{2 + p^{-1}}{1 + 2p^{-1}}.
\end{equation}
Upon using this, the result in (\ref{d-identity}) follows, since 
\begin{equation*} \label{zeta-value}
\frac{1}{\zeta(2)} = \prod_{p} (1 - p^{-2}) 
\end{equation*}
and this completes the proof of Theorem \ref{simplified-enkla-fallet} under the assumption of (\ref{extra-condition}).

Let us write $\alpha = \alpha_{1},$ $\beta = \alpha_{2},$ $\gamma = \alpha_{3}$ and $\delta = \alpha_{4}.$ Suppose that Theorem \ref{simplified-enkla-fallet} is to be proved for all $|\alpha_{i}| \leqslant C/\log{T}.$ Consider now (without the extra assumption (\ref{extra-condition})) any $|\alpha_{i}| \leqslant C/\log{T}.$ 
The idea is to use Cauchy's integral formula in order to go from the previous ``easier" case to the general case. 

Both the LHS and the main term in the RHS of Theorem \ref{simplified-enkla-fallet} are analytic functions of the complex variables $\alpha_1$, $\alpha_2$, $\alpha_3$, $\alpha_4$ (recall Remarks \ref{analyticity-remark} and \ref{analyticity-HL-remark}), let us denote them by 
L$(\alpha_1, \alpha_2, \alpha_3, \alpha_4)$ and 
R$(\alpha_1, \alpha_2, \alpha_3, \alpha_4)$ respectively. 
Let $D$ be the polydisc defined as the Cartesian product of the open discs $D_{i},$ i.e.\ $D = \prod_{i = 1}^{4} D_{i},$ where 
\[D_{i} := \{s \in \mathbb{C} : |s - \alpha_{i}| < r_{i}\}, \]
with  
\begin{equation*}  
r_{i} = \frac{2^{i+1}C}{\log{T}}. 
\end{equation*}
An application of Cauchy's integral formula yields that
\begin{align} \label{cauchy-steg-sixth-artttt}
&\textrm{L}(\alpha_1, \alpha_2, \alpha_3, \alpha_4) 
- \textrm{R}(\alpha_1, \alpha_2, \alpha_3, \alpha_4) \nonumber \\
&=\frac{1}{(2\pi i)^4} \int \cdots \int 
\int_{\partial D_{1} \times \cdots \times \partial D_{4}}
\frac{\textrm{L}(\beta_1, \beta_2, \beta_3, \beta_4) 
- \textrm{R}(\beta_1, \beta_2, \beta_3, \beta_4)}
{(\beta_{1} - \alpha_{1}) \cdots (\beta_{4} - \alpha_{4})} 
\, d\beta_{1} ... d\beta_{4}.
\end{align}
Now we notice that the $\beta_i$ satisfy $|\beta_i - \alpha_i| = r_i,$ 
which is easily seen to imply that $\beta_i \ll 1/\log{T}$ and that 
\[ |\beta_i| \geqslant 2C/\log{T}, \; \; \; |\beta_i + \beta_j| \geqslant 2C/\log{T} \; \; \; \; \textrm{and} \; \; \; |\beta_i - \beta_j| \geqslant 2C/\log{T}. \] 
This theorem thus applies if the $\beta_i$-terms are seen as shifts, so that we have 
\begin{equation} \label{repeat-diff-result}
\textrm{L}(\beta_1, \beta_2, \beta_3, \beta_4) 
- \textrm{R}(\beta_1, \beta_2, \beta_3, \beta_4) 
\ll T\log^{8}{T}.
\end{equation}
By using (\ref{repeat-diff-result}) and considering the trivial upper bound for (\ref{cauchy-steg-sixth-artttt}), the latter is $\ll T\log^{8}{T}.$ This finally completes the proof of Theorem \ref{simplified-enkla-fallet}.
\end{proof}

\subsection{Further simplification of Theorem \ref{sats1} in the most standard case} 
\label{subsection-further-simplification}

Let us finish the discussion of how to evaluate the integral in (\ref{integral-namn-A}). We take\footnote{We will later let $\lambda \to 0.$}  
\begin{equation} \label{shifts-val}
\{ \alpha, \beta, \gamma, \delta \} 
= \Big(\frac{i}{\log{T}}\Big) 
\{ \kappa + \lambda, -\lambda, -\lambda, -\kappa + \lambda \}
\end{equation}
and apply Theorem \ref{simplified-enkla-fallet}. One obtains an expression involving sums over $m$ (see (\ref{simplified-enkla-fallet-ekv})). We now show that such sums can be dealt with by using\footnote{To make sense of the RHS in the formula in the case $\alpha = 0,$ use the $\alpha^{0}$-coefficient in the Taylor series.} 
\begin{equation} \label{m-sum-ekv-noll}
\sum_{m \leqslant T^{u}} \frac{(T^{u}/m)^{\alpha}}{m} \approx 
\frac{(T^{\alpha u} - 1)}{\alpha}, 
\end{equation}
that is to say that after having done these replacements in all of the sums in the RHS of Theorem \ref{simplified-enkla-fallet}, the arising version of (\ref{simplified-enkla-fallet-ekv}) is true. 

We first prove this claim under the extra condition (\ref{extra-condition}). By partial summation we have for $t \ll T$ that 
\begin{equation} \label{m-sum-ekv-apostol}
\sum_{m \leqslant t} \frac{1}{m^{1 + \alpha}} 
= [t] \cdot t^{-(1 + \alpha)} 
+ (1 + \alpha) \int_{1}^{t} [w] \cdot w^{-2 - \alpha} \; dw 
= \frac{(1 - t^{-\alpha})}{\alpha} + O(1). 
\end{equation}
It follows immediately that 
\begin{equation} \label{m-sum-ekv-noll-exakt}
\sum_{m \leqslant T^{u}} \frac{(T^{u}/m)^{\alpha}}{m} 
= T^{\alpha u} \sum_{m \leqslant T^{u}} \frac{1}{m^{1+\alpha}} 
= \frac{(T^{\alpha u} - 1)}{\alpha} + O(1). 
\end{equation}
Using (\ref{m-sum-ekv-noll-exakt}) and trivial estimates, we reach our conclusion (i.e.\ the error parts in (\ref{m-sum-ekv-noll-exakt}) can be absorbed into the error term in (\ref{simplified-enkla-fallet-ekv})).

In order to retrieve the general case from this special case, we work exactly as in the proof of Theorem \ref{simplified-enkla-fallet}. However, in order to apply Cauchy's integral formula we must first show analyticity of the new RHS of (\ref{simplified-enkla-fallet-ekv}). To this end, we apply Lemma 2.5.1 in \cite{CFKRS}. Ensuring that the conditions for applying the latter are met here essentially boils down to checking that what one gets after using\footnote{Read this as doing the relevant replacements.} (\ref{m-sum-ekv-noll}) on 
\begin{align} \label{appendix-replacement}
&\sum_{m \leqslant T^{u}} 
\frac{f(x_{1},x_{2},T^{u}/m) f(x_{3},x_{4},T^{u}/m)}{m} \\
& = \sum_{m \leqslant T^{u}} \bigg\{ \frac{1}{m} 
\Big( \frac{1}{x_{1}x_{2}} - \frac{(T^{u}/m)^{-x_{1}}}
{x_{1}(x_{2} - x_{1})} - \frac{(T^{u}/m)^{-x_{2}}}
{x_{2}(x_{1} - x_{2})} \Big)
\Big( \frac{1}{x_{3}x_{4}} - \frac{(T^{u}/m)^{-x_{3}}}
{x_{3}(x_{4} - x_{3})} - \frac{(T^{u}/m)^{-x_{4}}}
{x_{4}(x_{3} - x_{4})} \Big) \bigg\} \nonumber
\end{align}
is an analytic function in terms of shifts 
$x_{1},$ $x_{2},$ $x_{3}$ and $x_{4}.$ 
Explicitly one ends up with 
\begin{align} \label{appendix-replacementsvar} 
&\frac{\log(T^{u})}{x_{1} x_{2} x_{3} x_{4}} 
+ \frac{(T^{-u x_{3}} - 1)}{x_{1} x_{2} x_{3}^{2} (x_{4} - x_{3})}
+ \frac{(T^{-u x_{4}} - 1)}{x_{1} x_{2} x_{4}^{2} (x_{3} - x_{4})}
+ \frac{(T^{-u x_{1}} - 1)}{x_{1}^{2} (x_{2} - x_{1}) x_{3} x_{4}}
+ \frac{(T^{-u x_{2}} - 1)}{x_{2}^{2} (x_{1} - x_{2}) x_{3} x_{4}} \nonumber \\ 
&- \frac{(T^{-u(x_{1} + x_{3})} - 1)}
{x_{1} x_{3} (x_{2} - x_{1}) (x_{4} - x_{3}) (x_{1} + x_{3})} 
- \frac{(T^{-u(x_{1} + x_{4})} - 1)}
{x_{1} x_{4} (x_{2} - x_{1}) (x_{3} - x_{4}) (x_{1} + x_{4})} \nonumber \\ 
&- \frac{(T^{-u(x_{2} + x_{3})} - 1)}
{x_{2} x_{3} (x_{1} - x_{2}) (x_{4} - x_{3}) (x_{2} + x_{3})} 
- \frac{(T^{-u(x_{2} + x_{4})} - 1)}
{x_{2} x_{4} (x_{1} - x_{2}) (x_{3} - x_{4}) (x_{2} + x_{4})}, 
\end{align}
which admittedly does not look too pleasant at first sight. However, there is a clever and natural strategy to employ to realise why (\ref{appendix-replacementsvar}) has to be analytic in 
$x_{1},$ $x_{2},$ $x_{3}$ and $x_{4}.$ 

Consider\footnote{To arrive at (\ref{appendix-analytic}), essentially one first uses Remark \ref{anal-of-f-standard} on both the expressions in round brackets in (\ref{appendix-replacement}), then moves the sum over $m$ inside the double-integral and finally applies (\ref{m-sum-ekv-noll}).} 
\begin{equation} \label{appendix-analytic}
\Psi(x_{1},x_{2},x_{3},x_{4}) := 
\frac{1}{(2 \pi i)^{2}} \int_{R_{2}} \int_{R_{1}} 
\frac{(T^{u(s + w)} - 1)}{(s + x_{1})(s + x_{2})s(w + x_{3})(w + x_{4})w(s + w)} 
\; ds \; dw, 
\end{equation}
where $R_{1}$ and $R_{2}$ denote counter-clockwise rectangular paths, with vertices at 
$\pm 2 \pm 2i$ and $\pm 1 \pm i$ respectively. Whenever 
\begin{equation} \label{appendix-CA-extra-condition}
 x_i \neq 0 \; \; \; \textrm{and } \; x_i \neq \pm x_j 
\end{equation}
is satisfied, one notices that $\Psi(x_{1},x_{2},x_{3},x_{4})$ equals (\ref{appendix-replacementsvar}), by carefully applying Cauchy's Residue Theorem twice. Also, trivial estimates give that $\Psi(x_{1},x_{2},x_{3},x_{4})$ is bounded. Therefore we may conclude that all the possible singularities of (\ref{appendix-replacementsvar}) are removable. 

In order to evaluate (\ref{integral-namn-A}) we thus use Theorem \ref{simplified-enkla-fallet} and proceed as explained above by using (\ref{m-sum-ekv-noll}). We then substitute in (\ref{shifts-val}) and view our answer as a Laurent series in terms of $\lambda.$ Since the LHS of (\ref{simplified-enkla-fallet-ekv}) remains bounded as $\lambda \to 0,$ we must have cancellation so that our Laurent series actually is a Taylor series. Since we are letting $\lambda \to 0$ anyway, what all this means is that in practice one focuses term-wise on finding just the $\lambda^{0}$-coefficients\footnote{Here the use of Mathematica was helpful.}. 

Doing this gives us an answer in terms of $\kappa.$ By seeing various symmetries in the calculations one can both save time and simplify the answer\footnote{On a related note, by Remark \ref{cancellation-forstas} one could focus on finding just the analytic part of each term. Although this would save time, one advantage of keeping track of the negative $\kappa$-powers is that if all their coefficients cancel in the (total) answer, then it is ``likely" that one has not made any calculation-errors!}. For example in the present integral-calculation one can spot that the contributions from the terms originating from the first and second term in (\ref{Hy-integrand-uttrycket}) are complex conjugates\footnote{Similarly one here pairs together the terms corresponding to the third and sixth term, and the fourth and fifth term in (\ref{Hy-integrand-uttrycket}).}. This will mean that via use of Euler's formula 
\begin{equation} \label{euler-trig}
\exp(ix) = \cos{x} + i \sin{x}, 
\end{equation}
one obtains nice trigonometric terms in the answer (see (\ref{integral-namn-A-svarrrr})). 

\subsection{Simplified versions of Theorem \ref{sats1} in two other cases} 
\label{subsection-simplified-version}

Let us recall the notation 
\begin{equation*} 
M(s) = \sum_{h \leqslant T^{u}} \frac{1}{h^{s}} 
\end{equation*}
and 
\begin{equation*} 
N(s) = \sum_{h \leqslant T^{u}} \frac{\log(T^{u}/h)}{h^{s}}.
\end{equation*}
Let us also recall the notation 
\begin{equation*} 
f(x_{1},x_{2},T_{2}) 
= \frac{1}{x_{1} x_{2}} - \frac{T_{2}^{-x_{1}}}{x_{1}(x_{2} - x_{1})} 
- \frac{T_{2}^{-x_{2}}}{x_{2}(x_{1} - x_{2})} 
\end{equation*}
and define 
\begin{equation} \label{g-function-spec-ett}
g(x_{1},x_{2},T_{2}) 
:= \frac{\log{T_{2}}}{x_{1} x_{2}} 
- \frac{1}{x_{1}^{2} x_{2}}
- \frac{1}{x_{2}^{2} x_{1}}
+ \frac{T_{2}^{-x_{1}}}{x_{1}^{2}(x_{2} - x_{1})} 
+ \frac{T_{2}^{-x_{2}}}{x_{2}^{2}(x_{1} - x_{2})}. 
\end{equation}
The following two theorems can be proved very similarly to Theorem \ref{simplified-enkla-fallet}.  

\begin{theorem} \label{simplified-m-n-sats}
With the same assumptions as in Theorem \ref{simplified-enkla-fallet}, we have 
\begin{align} \label{simplified-m-n-ekv}
&\int_{-\infty}^{\infty} M(\tfrac{1}{2} + it) N(\tfrac{1}{2} - it) 
\zeta(\tfrac{1}{2} + \alpha + it) \zeta(\tfrac{1}{2} + \beta + it) 
\zeta(\tfrac{1}{2} + \gamma - it) \zeta(\tfrac{1}{2} + \delta - it) 
w(t) \; dt \nonumber \\
&= \Big[ \int_{-\infty}^{\infty} w(t) \; dt \Big] 
\cdot a_{3} 
\cdot T^{-(\alpha + \beta + \gamma + \delta)/2} 
\cdot \sum_{m \leqslant T^{u}} \frac{1}{m} 
\mathcal{Q}_{A,B}(T,T^{u}/m,g,f) + O(T \log^{9}{T}), 
\end{align} 
where we still use (\ref{A-val-set}) and (\ref{B-val-set}).
\end{theorem}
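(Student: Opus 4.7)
The plan is to run the argument of Theorem \ref{simplified-enkla-fallet} essentially unchanged, tracking only the modifications forced by the log-weight $\log(T^{u}/k)$ coming from $N(\tfrac{1}{2}-it)$. In the framework of Section \ref{subsection-initial-step} I would take $a_{1}(h)=1$ and $a_{2}(k)=\log(T^{u}/k)$, so that the split into the main term (\ref{huvuddel-summor}) and the off-diagonal piece (\ref{feldel-summor}) is inherited directly; the extra factor $|a_{2}(k)| \ll \log T$ is easily absorbed into the $T^{\epsilon}$-saving in (\ref{feldel-summor}), still giving an error $\ll T$. The main term again decomposes into six pieces via (\ref{Hy-integrand-uttrycket}), and I would handle each of them by the two-inner-sum residue analysis of (\ref{tredje-termen-i-bevis-II})--(\ref{tredje-termen-i-bevis-III}).

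The substantive modification concerns the inner sum over $k$, which now reads $\sum_{k \leqslant X} \log(X/k)\, E_{\ldots}(kd)/k$ with $X = T^{u}/md$. Using the identity $\frac{1}{2\pi i}\int_{(c)} X^{s}/s^{2}\,ds = \log X$ (for $X > 1$) in place of Perron's formula (\ref{perron}), the contour-shifting argument of (\ref{euler-evaluatiooon-II})--(\ref{euler-evaluatiooon-IV}) carries over verbatim with the kernel $1/s$ replaced by $1/s^{2}$; the growth estimates (\ref{g-bound})--(\ref{g-diff-bound}) for $G$ and the horizontal-segment bounds are unaffected. The pole at $s=0$ is now a double pole, so its residue picks up the leading $\log X$ together with two ``derivative'' corrections from the remaining pole-factors, while the simple poles at $s = \alpha$ and $s = -\delta$ each carry an extra factor $1/s^{2}$ evaluated at the pole. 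Summing the three residues reproduces exactly the function $g(x_{1},x_{2},T^{u}/m)$ defined in (\ref{g-function-spec-ett}).

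The inner sum over $h$ is unchanged from Theorem \ref{simplified-enkla-fallet} and still produces $f(x_{1},x_{2},T^{u}/m)$ via (\ref{cauchy-res-perron-slut}). Re-assembling the residues as in (\ref{tredje-termen-i-bevis-III}), summing over $d$ to recover the arithmetic constant $a_{3}$ through the identity (\ref{d-identity}), and re-indexing via (\ref{q-curly-stor})--(\ref{Ff-defs}) yields the main term $a_{3}\cdot T^{-(\alpha+\beta+\gamma+\delta)/2}\cdot \sum_{m \leqslant T^{u}} m^{-1}\, \mathcal{Q}_{A,B}(T,T^{u}/m,g,f)$; the function $g$ lands in the $F_{1}$-slot because the log-weight sat on the $k$-index, which by the conventions of Section \ref{subsection-studying-Q} feeds into the first set $X = (A\setminus R)\cup(-S)$ of the $\mathcal{Q}$-construction. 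The error term is $O(T\log^{9}{T})$ rather than $O(T\log^{8}{T})$ simply because $N(\tfrac{1}{2}-it)$ is typically $\log T$ times larger than $M(\tfrac{1}{2}-it)$, so every trivially-bounded piece in the argument acquires one additional logarithm.

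The final step is the removal of the extra assumption (\ref{extra-condition}) by Cauchy's integral formula, exactly as at the end of the proof of Theorem \ref{simplified-enkla-fallet}. This requires analyticity of the new main term in the shifts, which by Section \ref{subsection-studying-Q} reduces to analyticity of $g(x_{1},x_{2},T_{2})$ in $x_{1},x_{2}$; this is established by the contour-integral device of Remark \ref{anal-of-f-standard} applied to $\frac{1}{2\pi i}\int_{R} T_{2}^{s}/(s^{2}(s+x_{1})(s+x_{2}))\,ds$, which equals $g$ by Cauchy's Residue Theorem whenever the $x_{i}$ are non-zero and distinct, and which is manifestly bounded. The main obstacle I expect is the careful bookkeeping around the double pole at $s=0$: one must check that the $G$-derivative approximation (\ref{G-liten-skillnad}), now multiplied by an additional $\log T$ arising from differentiating $X^{s}$ at $s=0$, still contributes within the $O(T\log^{9}{T})$ error budget.
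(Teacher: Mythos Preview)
Your proposal is correct and is exactly the argument the paper has in mind: the paper gives no separate proof of Theorem \ref{simplified-m-n-sats} beyond the sentence ``can be proved very similarly to Theorem \ref{simplified-enkla-fallet}'', and your sketch fills in precisely the expected modifications (the $1/s^{2}$ kernel producing the double pole and hence $g$, the slot assignment $F_{1}=g$, $F_{2}=f$ via the $k$-sum/$E$-factor correspondence, and the extra logarithm in the error). The bookkeeping concern you flag at the end is harmless: the $G'$-contribution from the double pole is $\ll d^{\epsilon}\log^{2}T$, one logarithm below the main $g$-residue terms, and so is absorbed into $O(T\log^{9}T)$ after multiplying through by the remaining factors.
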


\begin{theorem} \label{simplified-n-n-sats}
With the same assumptions as in Theorem \ref{simplified-enkla-fallet}, we have 
\begin{align} \label{simplified-n-n-ekv}
&\int_{-\infty}^{\infty} |N(\tfrac{1}{2} + it)|^{2} 
\zeta(\tfrac{1}{2} + \alpha + it) \zeta(\tfrac{1}{2} + \beta + it) 
\zeta(\tfrac{1}{2} + \gamma - it) \zeta(\tfrac{1}{2} + \delta - it) 
w(t) \; dt \nonumber \\
&= \Big[ \int_{-\infty}^{\infty} w(t) \; dt \Big] 
\cdot a_{3} 
\cdot T^{-(\alpha + \beta + \gamma + \delta)/2} 
\cdot \sum_{m \leqslant T^{u}} \frac{1}{m} 
\mathcal{Q}_{A,B}(T,T^{u}/m,g,g) + O(T \log^{10}{T}), 
\end{align} 
where we again still use (\ref{A-val-set}) and (\ref{B-val-set}).
\end{theorem}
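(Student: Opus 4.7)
The plan is to follow the proof of Theorem \ref{simplified-enkla-fallet} essentially verbatim, with one essential modification coming from the logarithmic coefficients of $N$. Starting with $a_1(h) = a_2(h) = \log(T^u/h)$ in the framework of Section \ref{subsection-initial-step}, the expansion of $|N(\tfrac{1}{2}+it)|^2$, the application of Theorem \ref{sats1} to each $I(h,k)$, and the M\"obius/GCD manipulation all proceed exactly as in (\ref{ms-expanded-ett})--(\ref{huvuddel-summor}). The contribution of the error term $E(h,k)$ is controlled as in Section \ref{subsection-specialising}; the extra $(\log T)^{O(1)}$ coming from the new coefficients is harmless.

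The only genuinely new step is in the evaluation of the inner sums: with $X = T^u/(md)$, each now has the shape $\sum_{h \leqslant X} B_{\cdots}(hd)\log(X/h)/h$, and the appropriate Perron-type identity becomes
\begin{equation*}
\sum_{h \leqslant X} \frac{B_{\cdots}(hd) \log(X/h)}{h} = \frac{1}{2 \pi i} \int_{\epsilon - iW}^{\epsilon + iW} \frac{F(1+s,\alpha,\beta,\gamma,\delta,d)\, X^s}{s^2}\, ds + O(d^\epsilon \log T),
\end{equation*}
using the identity $\frac{1}{2 \pi i}\int_{(c)} X^s/s^2\, ds = \log X$ for $X > 1$. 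Justification mirrors that of (\ref{euler-evaluatiooon-II}), with at most an extra factor of $\log W \ll \log T$ in the off-peak $h$-estimates, which is absorbed into the error; the same half-integer perturbation trick takes care of non-integer cut-offs.

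Shifting the contour from $\mathrm{Re}(s)=\epsilon$ to $\mathrm{Re}(s)=-0.05$, the poles at $s=\alpha$ and $s=-\delta$ coming from the two zeta factors remain simple, but $s=0$ is now a \emph{double} pole from $1/s^2$. Computing $\frac{d}{ds}\bigl[\zeta(1-\alpha+s)\zeta(1+\delta+s)G(1+s)X^s\bigr]\bigr|_{s=0}$ produces precisely the three terms $\frac{\log X}{x_1 x_2} - \frac{1}{x_1^2 x_2} - \frac{1}{x_1 x_2^2}$ featured in $g$ (under the identification $T_2=X$, $x_1=-\alpha$, $x_2=\delta$), together with a lower-order $G'(1)$-type piece that is absorbed into the error, while the simple-pole residues at $s=\alpha$ and $s=-\delta$ furnish the two $T_2^{-x_i}$ terms. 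In short, $g(x_1,x_2,T_2)$ plays the role here that $f$ did there, and admits the parallel representation $g(x_1,x_2,T_2) = \frac{1}{2 \pi i}\int_R T_2^s/(s^2(s+x_1)(s+x_2))\, ds$, from which its analyticity in $x_1, x_2$ is immediate (cf.\ Remark \ref{anal-of-f-standard}). Reassembling over $m$ and $d$ as in (\ref{tredje-termen-i-bevis-III})--(\ref{d-identity}) then yields the claimed main term, with the same local factor $a_3$ arising from the identical $d$-sum.

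Passage from the genericity hypothesis (\ref{extra-condition}) to arbitrary shifts $|\alpha_i|\leqslant C/\log T$ proceeds through the Cauchy integral formula on a polydisc exactly as at the end of the proof of Theorem \ref{simplified-enkla-fallet}; the requisite analyticity of the main term in the shifts follows from analyticity of $g$ together with the general discussion in Section \ref{subsection-studying-Q}. The main obstacle I expect is bookkeeping: the double pole at $s=0$ generates several terms of different orders in the shifts, all of which must be matched against the specific form of $g$ in (\ref{g-function-spec-ett}) or else absorbed into the weakened error. The stated error $O(T\log^{10} T)$ is two logs larger than in Theorem \ref{simplified-enkla-fallet}, reflecting the fact that each $\log(T^u/h)$ factor inflates the relevant Dirichlet polynomial by an additional $\log T$.
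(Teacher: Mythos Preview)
Your proposal is correct and matches the paper's approach: the paper simply states that Theorems \ref{simplified-m-n-sats} and \ref{simplified-n-n-sats} ``can be proved very similarly to Theorem \ref{simplified-enkla-fallet}'' without further detail, and you have correctly identified the one substantive change (the Perron kernel $X^s/s^2$ in place of $X^s/s$, producing a double pole at $s=0$ whose residue yields $g$ rather than $f$) together with the corresponding two-log loss in the error term.
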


Similarly to how (\ref{m-sum-ekv-noll}) was used, one may handle the above sums over $m$ by using 
\begin{equation} \label{m-sum-ekv-ett}
\sum_{m \leqslant T^{u}} \frac{(T^{u}/m)^{\alpha} \log(T^{u}/m)}{m} \approx 
\frac{1}{\alpha^{2}} - \frac{T^{\alpha u}}{\alpha^{2}} 
+ \frac{T^{\alpha u} \log(T^{u})}{\alpha}
\end{equation}
and 
\begin{equation} \label{m-sum-ekv-tva}
\sum_{m \leqslant T^{u}} \frac{(T^{u}/m)^{\alpha} \log^{2}(T^{u}/m)}{m} \approx 
- \frac{2}{\alpha^{3}} + \frac{2 T^{\alpha u}}{\alpha^{3}} 
- \frac{2 T^{\alpha u} \log(T^{u})}{\alpha^{2}} 
+ \frac{T^{\alpha u} \log^{2}(T^{u})}{\alpha},
\end{equation}
these two formulas arising by applying partial summation to (\ref{m-sum-ekv-apostol}). 

\subsection{Differentiation} 
\label{subsection-differentiation}

Some of our integrals in (\ref{integral-namn-A})-(\ref{integral-namn-J}) involve differentiation. This is no problem though, as it is possible to differentiate our Theorems \ref{simplified-enkla-fallet}, \ref{simplified-m-n-sats} and \ref{simplified-n-n-sats} with respect to any of the shifts. To do this, we simply use ``Cauchy's integral trick" and the result follows immediately. An illustration of this is now done, namely in the case when we differentiate Theorem \ref{simplified-enkla-fallet} once with respect to $\alpha.$ The conclusion in this case is as follows: 

\begin{theorem} \label{simplified-enkla-fallet-diffat}
With notation and assumptions as in Theorem \ref{simplified-enkla-fallet}, \begin{align} \label{simplified-enkla-fallet-diffat-ekv}
&\int_{-\infty}^{\infty} |M(\tfrac{1}{2} + it)|^{2} 
\zeta'(\tfrac{1}{2} + \alpha + it) \zeta(\tfrac{1}{2} + \beta + it) 
\zeta(\tfrac{1}{2} + \gamma - it) \zeta(\tfrac{1}{2} + \delta - it) 
w(t) \; dt \nonumber \\
&= \Big[ \int_{-\infty}^{\infty} w(t) \; dt \Big] 
\cdot a_{3} 
\cdot \frac{\partial}{\partial\alpha} 
\Big[ T^{-(\alpha + \beta + \gamma + \delta)/2} 
\sum_{m \leqslant T^{u}} \frac{1}{m} 
\mathcal{Q}_{A,B}(T,T^{u}/m,f,f) \Big] 
+ O(T \log^{9}{T}). 
\end{align}
\end{theorem}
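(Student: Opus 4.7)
The plan is to apply Cauchy's integral formula to the previous theorem, which is exactly the ``Cauchy's integral trick'' hinted at in the preamble. Observe first that $\zeta'(\tfrac{1}{2} + \alpha + it)$ is literally the $\alpha$-derivative of $\zeta(\tfrac{1}{2} + \alpha + it)$, so differentiating the LHS of (\ref{simplified-enkla-fallet-ekv}) once in $\alpha$ (which is justified under the integral sign by the rapid decay of $w(t)$ together with the well-known polynomial bounds for $\zeta(\tfrac{1}{2} + it)$) yields exactly the LHS of (\ref{simplified-enkla-fallet-diffat-ekv}). Similarly $\partial/\partial\alpha$ of the main term in (\ref{simplified-enkla-fallet-ekv}) is, by construction, the main term of (\ref{simplified-enkla-fallet-diffat-ekv}). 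The entire task thus reduces to showing that differentiation in $\alpha$ converts the error term $O(T \log^{8} T)$ of Theorem \ref{simplified-enkla-fallet} into an error term of size $O(T \log^{9} T)$.

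To carry this out, write $E(\alpha,\beta,\gamma,\delta)$ for the difference between the LHS of (\ref{simplified-enkla-fallet-ekv}) and its main term. Both of these are analytic functions of the shifts (the LHS trivially, the main term by Remark \ref{analyticity-HL-remark}), and Theorem \ref{simplified-enkla-fallet} gives $|E(\alpha,\beta,\gamma,\delta)| \ll T \log^{8} T$ uniformly on any fixed polydisc $|\alpha_{i}| \leqslant C'/\log{T}$. For a point $(\alpha,\beta,\gamma,\delta)$ with $|\alpha|,|\beta|,|\gamma|,|\delta| \leqslant C/\log{T}$, Cauchy's integral formula gives
\[ \frac{\partial E}{\partial \alpha}(\alpha,\beta,\gamma,\delta) = \frac{1}{2\pi i} \oint_{|z - \alpha| = r} \frac{E(z,\beta,\gamma,\delta)}{(z-\alpha)^{2}} \, dz \]
with $r := C/\log{T}$. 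On this circle $|z| \leqslant 2C/\log{T}$, so applying Theorem \ref{simplified-enkla-fallet} with $C'=2C$ in place of $C$ bounds the integrand in absolute value by $T \log^{8}{T} / r^{2}$, and the trivial contour estimate gives
\[ \frac{\partial E}{\partial \alpha}(\alpha,\beta,\gamma,\delta) \ll \frac{T \log^{8} T}{r} \ll T \log^{9} T, \]
which is precisely the error term in (\ref{simplified-enkla-fallet-diffat-ekv}).

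Assembling the three ingredients above proves the theorem. The only place where care is needed is the uniformity of the $O(T \log^{8} T)$ error in Theorem \ref{simplified-enkla-fallet} over an enlarged polydisc: one must know that the implied constant depends only on $C'$, so that doubling the radius at no cost is legitimate. This is immediate from the proof of Theorem \ref{simplified-enkla-fallet} (the hypothesis is simply that all $\alpha_{i} \ll (\log T)^{-1}$ with any fixed implied constant). The same mechanism extends verbatim to derivatives with respect to any of the other shifts and, by iteration, to higher-order derivatives, each differentiation costing exactly one factor of $\log T$ in the error term. This is precisely what is needed in order to handle all the integrals (\ref{integral-namn-A})--(\ref{integral-namn-J}), since the derivatives appearing in (\ref{f-der-squared}) can be produced by differentiating Theorems \ref{simplified-enkla-fallet}, \ref{simplified-m-n-sats} and \ref{simplified-n-n-sats} in the shifts before finally letting the shifts tend to zero.
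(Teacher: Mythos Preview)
Your proof is correct and follows essentially the same approach as the paper: define the difference $E$ (called $D$ there) between the two sides of Theorem~\ref{simplified-enkla-fallet}, note its analyticity in the shifts, and apply Cauchy's integral formula on a circle of radius $\asymp 1/\log T$ to gain one factor of $\log T$ in the error. Your additional remarks on uniformity of the implied constant and on iterating to higher derivatives are accurate and match the paper's intended use; one minor quibble is that $w(t)$ has compact support rather than ``rapid decay'', but this only makes the interchange of differentiation and integration easier to justify.
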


\begin{proof}
Beginning with Theorem \ref{simplified-enkla-fallet}, we know that the LHS and the main term in the RHS of (\ref{simplified-enkla-fallet-ekv}) are analytic functions of the complex variables $\alpha$, $\beta$, $\gamma,$ $\delta.$ Take the first one and subtract the latter and we get an analytic function, let us call it 
$D(\alpha, \beta, \gamma, \delta).$ Theorem \ref{simplified-enkla-fallet} tells us that 
\[ D(\alpha, \beta, \gamma, \delta) \ll T \log^{8}{T}. \] 
Now use Cauchy's integral formula for the derivative with a radius 
$r = 1/\log{T},$ i.e.\ 
\[ \frac{\partial D(\alpha, \beta, \gamma, \delta)}{\partial \alpha} 
= \frac{1}{2\pi i} \int_{|w - \alpha| = r}
\frac{D(w, \beta, \gamma, \delta)}{(w - \alpha)^2} \; dw. \]
Then the pathlength is of order $(\log{T})^{-1}$ and the integrand is trivially 
\[ \ll (\log{T})^2 \cdot T \log^{8}{T} 
= T \log^{10}{T}, \] 
giving 
\[ \frac{\partial D}{\partial\alpha} \ll T \log^{9}{T}. \]
To finish the proof, remember that clearly the derivative of the difference of two analytic functions equals the difference of the derivatives of those two functions. 
\end{proof}

\section*{Acknowledgement}

I would like to thank my supervisor Roger Heath-Brown for his excellent guidance and help.

\end{document}